\newcounter{Hequation}
\g@addto@macro\equation{\stepcounter{Hequation}}\makeatother
\newtheorem{theorem}{Theorem}
\newtheorem{proposition}[theorem]{Proposition}
\newtheorem{lemma}[theorem]{Lemma}
\newtheorem{corollary}[theorem]{Corollary}
\def\sfc{\mathsf c}
\def\sfA{\mathsf A}
\def\sfB{\mathsf B}
\def\sfC{\mathsf C}
\def\sfH{\mathsf H}
\def\sfL{\mathsf L}
\def\sfPi{\mathsf\Pi}
\def\sfT{\mathsf T}
\newcommand{\Email}[1]{\email{\href{mailto:#1}{\textsf{#1}}}}
\newcommand{\step}[1]{\medskip\noindent\textit{\textbf{$\bullet$ #1}}}
\newcommand{\be}[1]{\begin{equation}\label{#1}}
\newcommand{\ee}{\end{equation}}
\renewcommand{\(}{\left(}
\renewcommand{\)}{\right)}
\newcommand{\C}{{\mathcal C}}
\newcommand{\R}{{\mathbb R}}
\newcommand{\intw}[2]{\int_{\R^d}{#1}\dd #2}
\newcommand{\iintw}[2]{\iint_{\R^d\times\R^d}{#1}\dd #2}
\newcommand{\n}[1]{\left\|#1\right\|}
\newcommand{\nrm}[2]{\n{#1}_{#2}}
\newcommand{\bangle}[1]{\left\langle #1\right\rangle}
\newcommand{\scalar}[2]{\left\langle{#1},{#2}\right\rangle}
\newcommand{\op}[1]{\mathsf{#1}}
\renewcommand\d{\mathrm d}
\newcommand{\rint}{\int_{\R^d}}
\newcommand{\riint}{\iint_{\R^d\times\R^d}}
\newcommand\Dx{\nabla_x}
\newcommand\Dv{\nabla_v}
\newcommand\dd{\,\d}
\newcommand\dt{\frac{\mathrm d}{\mathrm dt}}
\newcommand{\init}{\mathrm{in}} 
\newcommand{\M}{F} 
\newcommand{\gam}{\alpha} 
\newcommand{\cst}{\delta} 
\newcommand{\bb}{\mathrm b} 
\newcommand{\dmu}{\d\mu} 
\newcommand{\ddmu}{\dd\mu} 
\newcommand{\Id}{\mathsf{Id}} 
\newcommand{\dnu}{\d\nu} 
\begin{document}

\title[Sub-exponential Hypocoercivity]{Hypocoercivity and sub-exponential local equilibria}

\date{\today}

\author[E.~Bouin, J.~Dolbeault, L.~Lafleche, C.~Schmeiser]{E.~Bouin, J.~Dolbeault, L.~Lafleche, C.~Schmeiser}

\address[Emeric Bouin]{CEREMADE (CNRS UMR n$^\circ$ 7534), PSL university, Universit\'e Paris-Dauphine, Place de Lattre de Tassigny, 75775 Paris 16, France\hfill\ }
\Email{bouin@ceremade.dauphine.fr}

\address[Jean Dolbeault]{CEREMADE (CNRS UMR n$^\circ$ 7534), PSL university, Universit\'e Paris-Dauphine, Place de Lattre de Tassigny, 75775 Paris 16, France\hfill\ }
\Email{dolbeaul@ceremade.dauphine.fr}

\address[Laurent Lafleche]{CEREMADE (CNRS UMR n$^\circ$ 7534), PSL university, Universit\'e Paris-Dauphine, Place de Lattre de Tassigny, 75775 Paris 16, France\hfill\ }
\Email{lafleche@ceremade.dauphine.fr}

\address[Christian Schmeiser]{Fakult\"at f\"ur Mathematik, Universit\"at Wien, Oskar-Morgenstern-Platz~1, 1090 Wien, Austria\hfill\ }
\Email{Christian.Schmeiser@univie.ac.at}


\begin{abstract} Hypocoercivity methods are applied to linear kinetic equations without any space confinement, when local equilibria have a sub-exponential decay. By Nash type estimates, global rates of decay are obtained, which reflect the behavior of the heat equation obtained in the diffusion limit. The method applies to Fokker-Planck and scattering collision operators. The main tools are a weighted Poincar\'e inequality (in the Fokker-Planck case) and norms with various weights. The advantage of weighted Poincar\'e inequalities compared to the more classical weak Poincar\'e inequalities is that the description of the convergence rates to the local equilibrium does not require extra regularity assumptions to cover the transition from super-exponential and exponential local equilibria to sub-exponential local equilibria.\end{abstract}

\subjclass[2010]{Primary: 82C40. Secondary: 76P05; 35H10; 35Q84.}

\keywords{Hypocoercivity; linear kinetic equations; Fokker-Planck operator; scattering operator; transport operator; weighted Poincar\'e inequality; weak Poincar\'e inequality; sub-exponential local equilibria; micro/macro decomposition; diffusion limit; decay rate}

\maketitle\thispagestyle{empty}\vspace*{-1cm}


\section{Introduction}\label{sec:introduction}

This paper is devoted to a \emph{hypocoercivity} method designed for obtaining decay rates in weighted $\mathrm L^2$ norms of the solution to the Cauchy problem
\be{eq:main}
\begin{cases}
\partial_tf+v\cdot\Dx f=\sfL f\,,\\[4pt]
f(0,x,v)=f^\init(x,v)\,,
\end{cases}
\ee
for a distribution function $f(t,x,v)$, with \emph{position} $x\in\R^d$, \emph{velocity} $v\in\R^d$, and \emph{time} \hbox{$t\ge0$}. The linear \emph{collision operator} $\sfL$ acts only on the velocity variable and its null space is assumed 
to be one-dimensional and spanned by the \emph{local equilibrium} $\M$, a probability density of the form
\be{SubExponential}
   \M(v)=C_\gam\,e^{-\bangle v^\gam} \,,\quad v\in\R^d \,,\qquad\mbox{with } C_\gam^{-1} = \rint e^{-\bangle v^\gam} \dd v \,,
\ee
where we use the notation
\[
\bangle v:=\sqrt{1+|v|^2}\,.
\]
Our results will be concerned with the sub-exponential case $0<\alpha<1$, as opposed to the exponential ($\alpha=1$)
and super-exponential ($\alpha> 1$, including the Gaussian with $\alpha=2$) cases. This specific choice of the form of the equilibrium is for notational convenience in the proofs. The results can easily be extended to more general distributions~$F$, satisfying
\[
   \alpha := \lim_{|v|\to+\infty} \frac{\log(-\log F(v))}{\log |v|} \in (0,1) \,.
\]

We shall consider two types of collision operators, either the \emph{Fokker-Planck} operator
\[
\sfL_1 f=\Dv\cdot\Big(\M\,\Dv\big(\M^{-1}\,f\big)\Big) \,,
\]
or the \emph{scattering} operator
\[
\sfL_2 f=\rint\bb(\cdot,v')\,\Big(f(v')\,\M(\cdot)-f(\cdot)\,\M(v')\Big)\dd v'\,.
\]
We assume \emph{local mass conservation}
\[
\rint\sfL f\dd v=0\,,
\]
which always holds for $\sfL=\sfL_1$, and also for $\sfL=\sfL_2$ under the assumption
\be{hyp:b_mass}\tag{H1}
\rint\big(\bb(v,v')-\bb(v',v)\big)\,\M(v')\dd v'=0  \,.
\ee
Note that \emph{micro-reversibility}, \emph{i.e.}, the symmetry of $\bb$, is not required. 

Further assumptions on the \emph{cross-section} $\bb$ that will be given below guarantee that the operators $\sfL_1$ and 
$\sfL_2$ are responsible for the same type of asymptotic behavior. As a motivation, the relaxation properties of $\sfL_1$ 
can be made transparent by the symmetrizing transformation $f=g\,\sqrt{F}$, leading to the transformed operator
\[
g\mapsto\frac{1}{\sqrt{F}}\,\nabla_v\cdot \left(F\,\nabla_v\,\frac{g}{\sqrt{F}}\right) = \Delta_v g - \nu_1(v)\,g
\]
with the \emph{collision frequency}
\be{nu1}
  \nu_1(v) = \frac{\Delta_v F}{2F} - \frac{|\nabla_v F|^2}{4F^2} \approx \frac{\alpha^2}{4} |v|^{-2(1-\alpha)}
  \qquad\mbox{as } |v|\to\infty \,.
\ee
Partially motivated by this, we assume the existence of constants $\beta$, $\overline\bb$, $\underline\bb>0$, $\gamma\ge 0$, with 
$\gamma\le\beta$,  $\gamma<d$, such that
\be{hyp:b_bounds}\tag{H2}
   \underline\bb \bangle{v}^{-\beta} \bangle{v'}^{-\beta} \le \bb(v,v') \le \overline\bb \min\big\{|v-v'|^{-\beta}, |v-v'|^{-\gamma}\big\} \,.
\ee
The upper bound with the restriction on the exponent $\gamma$ serves as a local integrability assumption. Hypotheses
\eqref{hyp:b_mass} and~\eqref{hyp:b_bounds} permit the choice $\bb(v,v') = \bangle{v}^{-\beta}\bangle{v'}^{-\beta}$ with
arbitrary $\beta>0$, as well as Boltzmann kernels $\bb(v,v') = |v-v'|^{-\beta}$ with $0<\beta<d$.

As a consequence of~\eqref{hyp:b_bounds} the collision frequency
\[
   \nu_2(v) = \rint\bb(v,v')\,\M(v')\dd v'
\]
satisfies
\begin{align*}
  \underline\bb \bangle{v}^{-\beta} \rint \bangle{v'}^{-\beta}F(v')\dd v' \le \nu_2(v)
  \le &\,\,\overline\bb \int_{|v-v'|<1} |v-v'|^{-\gamma}F(v')\dd v' \\ &+  \overline\bb \int_{|v-v'|>1} |v-v'|^{-\beta}F(v')\dd v' \,.
\end{align*}
It is obvious that the last term is $O(|v|^{-\beta})$ as $|v|\to\infty$, and the first term on the right hand side is asymptotically 
small compared to that as a consequence of the sub-exponential decay of $F$. Therefore there exist constants 
$\overline\nu\ge \underline\nu>0$ such that 
\be{nu2-est}
   \underline\nu \bangle{v}^{-\beta} \le \nu_2(v) \le \overline\nu \bangle{v}^{-\beta} \qquad\forall\, v\in\R^d \,, 
\ee
and the behavior for large $|v|$ is as in~\eqref{nu1} with $\beta=2(1-\alpha)$.

Since both collision operators are propagators of Markov processes with the same positive stationary distribution $F$, they
also share the (quadratic) entropy dissipation property
\[
  \frac{1}{2} \frac{\dd}{\dd t} \iintw{f^2} x\dd\mu= \iintw{(\sfL f)f} x\dd\mu \le 0 \,,\qquad\mbox{with } \dd\mu(v) := \frac{\dd v}{F(v)} \,,
\]
where the dissipations are given by
\be{L1-diss}
  - \rint (\sfL_1 f)f\,\dd\mu = \rint \left| \nabla_v \frac{f}{F}\right|^2 F\,\dd v
\ee
and
\be{L2-diss}
  - \rint (\sfL_2 f)f\,\dd\mu = \frac{1}{2} \rint\rint \bb(v,v') \big( f'F - fF'\big)^2 \dd\mu \dd\mu' \,,
\ee
with the prime denoting evaluation at $v'$. For a derivation of~\eqref{L2-diss} see, \emph{e.g.},~\cite{MR1803225,MR2763032}.

Our purpose is to consider solutions of~\eqref{eq:main} with non-negative initial datum $f^\init$ and to study their large time behavior. If $f^\init$ has finite mass, then mass is conserved for any $t\ge0$. Since there is no stationary state with finite mass, it is expected that $f(t,\cdot,\cdot)$ locally tends to zero as $t\to+\infty$. However, the dissipations~\eqref{L1-diss} and 
\eqref{L2-diss} vanish for arbitrary \emph{local equilibria} of the form $f(t,x,v) = \rho(t,x)\,F(v)$, and therefore the analysis of
the decay to zero requires an \emph{hypocoercivity} method. 

For the formulation of our main result, we introduce the norms
\be{Norms}
\|f\|_k :=\(\iintw{f^2\,\bangle v^k}{x\dd\mu}\)^{1/2} \,,\qquad k\in\R,
\ee
as well as the scalar product $\scalar{f_1}{f_2}:=\riint f_1\,f_2\dd x\ddmu$ on $\mathrm L^2(\d x\ddmu)$ with the induced
norm $\|f\|^2:=\|f\|_0^2 = \scalar ff$.
\begin{theorem}\label{th:main} Let $\gam\in(0,1)$, $\beta>0$, $k>0$ and let $\M$ be given by~\eqref{SubExponential}. Assume that either $\sfL=\sfL_1$ and $\beta=2\,(1-\gam)$, or $\sfL=\sfL_2$ and~\eqref{hyp:b_mass},~\eqref{hyp:b_bounds}. Then there exists a constant $\C>0$ such that any solution $f$ of~\eqref{eq:main} with initial datum $f^\init\in\mathrm L^2(\bangle v^k\d x\ddmu)\cap\mathrm L^1_+(\d x\dd v)$ satisfies
\[
   \|f(t,\cdot,\cdot)\|^2 \le \C\,\frac{\nrm{f^\init}{}^2}{(1+\kappa\,t)^{\,\zeta}} \qquad t\ge 0
\]
with rate $\zeta=\min\left\{d/2,k/\beta\right\}$ and with $\kappa>0$, which is an explicit function of the two quotients $\nrm{f^\init}{}/\nrm{f^\init}k$ and $\nrm{f^\init}{}/\nrm{f^\init}{\mathrm L^1(\d x\dd v)}$.\end{theorem}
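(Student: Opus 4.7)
The plan is to follow the Dolbeault--Mouhot--Schmeiser (DMS) hypocoercivity scheme, replacing the standard spectral-gap hypothesis on $\sfL$ by a weighted Poincar\'e inequality adapted to the sub-exponential measure $\M\dd v$, and then converting the resulting weighted dissipation into an algebraic decay rate by Nash-type interpolation. Algebraic (rather than exponential) decay is expected because, in the diffusion limit, $\rho_f$ evolves by a heat equation on $\R^d$ without spatial confinement.

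I start from the macroscopic projection $\Pi f(x,v)=\rho_f(x)\,\M(v)$ with $\rho_f=\rint f\dd v$, set $\sfT=v\cdot\Dx$, and introduce the DMS auxiliary operator $\sfA:=(\Id+(\sfT\Pi)^*(\sfT\Pi))^{-1}(\sfT\Pi)^*$ together with the modified functional $\mathsf H[f]:=\tfrac12\,\nrm{f}{}^2+\varepsilon\,\scalar{\sfA f}{f}$, for $\varepsilon>0$ small enough that $\mathsf H\sim\nrm{f}{}^2$. Differentiating along~\eqref{eq:main} yields
\[
\dt\mathsf H[f]=\scalar{\sfL f}{f}-\varepsilon\,\scalar{\sfA\sfT\Pi f}{f}+\varepsilon\,R[f],
\]
where $R[f]$ is a bilinear remainder absorbed by Cauchy--Schwarz. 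The core microscopic step is the \emph{weighted Poincar\'e inequality} for $\M$, which refines~\eqref{L1-diss} (respectively~\eqref{L2-diss} combined with~\eqref{nu2-est}) into
\[
-\scalar{\sfL f}{f}\ge c\,\nrm{(\Id-\Pi)f}{-\beta}^2\,,\qquad \beta=2(1-\gam),
\]
the weight $\bangle v^{-\beta}$ being precisely that dictated by the collision frequency in~\eqref{nu1}. On the macroscopic side, $\scalar{\sfA\sfT\Pi f}{f}$ is, up to constants, an $\dot H^{-1}$-type norm of $\Dx\rho_f$. Summing, one reaches the entropy--dissipation estimate $\dt\mathsf H[f]+c\,\mathcal D[f]\le 0$, where $\mathcal D[f]$ gathers the two contributions just described.

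To close the scheme in the absence of an $x$-Poincar\'e inequality, I combine two interpolations. On the microscopic side, H\"older's inequality in the velocity weight gives
\[
\nrm{(\Id-\Pi)f}{}^2\le\nrm{(\Id-\Pi)f}{-\beta}^{2k/(k+\beta)}\,\nrm{f}{k}^{2\beta/(k+\beta)},
\]
exchanging the $\bangle v^{-\beta}$ weight against a power of the $\bangle v^k$ moment. On the macroscopic side, a Nash-type inequality, combined with mass conservation $\nrm{\rho_f(t,\cdot)}{\mathrm L^1(\d x)}=\nrm{f^\init}{\mathrm L^1(\d x\dd v)}$ (which follows from $f\ge0$ and $\rint\sfL f\dd v=0$), allows one to bound $\nrm{\rho_f}{\mathrm L^2(\d x)}^2$ by a power of the macroscopic dissipation times a power of the $\mathrm L^1$ norm. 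Setting $\zeta=\min\{d/2,k/\beta\}$ and combining, I obtain a functional inequality of the form $\nrm{f}{}^{2+2/\zeta}\le\Lambda\,\mathcal D[f]$, with $\Lambda$ depending polynomially on $\nrm{f^\init}{k}$ and $\nrm{f^\init}{\mathrm L^1(\d x\dd v)}$. Plugged back into $\dt\mathsf H+c\,\mathcal D\le 0$, this produces the ODE $\mathsf H'\le-\kappa\,\mathsf H^{1+1/\zeta}$, whose integration gives the announced $(1+\kappa t)^{-\zeta}$ decay.

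The main obstacle, in my view, is the uniform-in-time propagation of the moment $\nrm{f(t)}{k}$, without which the interpolation step is vacuous. I would address it by a separate energy estimate for $\dt\nrm{f}{k}^2$: the transport contribution is skew-symmetric in $x$ and vanishes, while the commutator of $\sfL$ with the weight $\bangle v^k$ has to be shown either non-positive or absorbable by a small fraction of the weighted dissipation already present in $\mathcal D[f]$. For $\sfL_1$, integration by parts together with the collision-frequency asymptotics~\eqref{nu1} controls this commutator; for $\sfL_2$, the pointwise bounds~\eqref{hyp:b_bounds} and~\eqref{nu2-est} permit a direct estimate of gain and loss contributions to the $\bangle v^k$-weighted norm. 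Once the bound $\nrm{f(t)}{k}\lesssim\nrm{f^\init}{k}$ is secured uniformly in $t$, the explicit dependence of $\kappa$ on the quotients $\nrm{f^\init}{}/\nrm{f^\init}{k}$ and $\nrm{f^\init}{}/\nrm{f^\init}{\mathrm L^1(\d x\dd v)}$ claimed in the theorem follows by tracking constants through the ODE argument.
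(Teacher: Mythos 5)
Your overall architecture coincides with the paper's: the DMS operator $\sfA$, the functional $\sfH[f]$, microscopic coercivity in the weighted norm $\nrm{(\Id-\sfPi)f}{-\beta}$, Nash's inequality plus mass conservation for the macroscopic part, and the H\"older exchange of $\bangle v^{-\beta}$ against the $\bangle v^{k}$ moment leading to $\sfH'\le-\kappa\,\sfH^{1+1/\zeta}$. The genuine gap is exactly at the point you flag as the ``main obstacle'', and your proposed resolution does not work as stated. The commutator of $\sfL$ with the weight is \emph{not} sign-definite and is \emph{not} absorbable by a small fraction of the dissipation: what one actually gets is
\[
\scalar{\sfL f}{f\,\bangle v^{k}}\;\le\;\riint\big(a_k\,\mathds1_{|v|<R_k}-b_k\,\bangle v^{-\ell}\big)\,|f|^2\,\bangle v^{k}\dd x\ddmu ,
\]
and the positive part, supported on a bounded velocity ball, is of the order of the local $\mathrm L^2$ norm, hence comparable to $\nrm{\sfPi f}{}^2$. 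But the hypocoercive dissipation $\mathcal D[f]$ vanishes on local equilibria $\rho(x)\,\M(v)$ (its macroscopic component only controls $\nrm{\nabla u}{\mathrm L^2(\d x)}$ and $\nrm{\Delta u}{\mathrm L^2(\d x)}$ with $u-\Theta\,\Delta u=\rho_f$, not $\nrm{\rho_f}{\mathrm L^2(\d x)}$ itself — that is precisely why only algebraic decay holds), so no multiple of $\mathcal D[f]$ can dominate this term; if it could, one would get exponential decay of $\|f\|^2$, contradicting the heat-equation behavior in the diffusion limit. Falling back on the crude bound $\dt\nrm f{k}^2\le 2\,a_k\,\nrm{f^\init}{}^2$ only yields linear-in-time growth of $\nrm{f(t)}k^2$, which, fed into your interpolation, degrades the differential inequality and produces a decay rate strictly worse than $\zeta=\min\{d/2,k/\beta\}$.

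What is needed (and what the paper does in its Section~3) is an extra idea: a factorization argument. Write $\sfL-\sfT=\sfB+\sfC$ with $\sfC=a\,\mathds1_{|v|<R}$, so that $\sfC$ maps the unweighted $\mathrm L^2(\d x\ddmu)$ into any $\mathrm L^2(\bangle v^{k_2}\d x\ddmu)$, while the Lyapunov estimate above shows that $e^{t\sfB}$ is a contraction on each weighted space and, by interpolating $\nrm\cdot{k_1}$ between $\nrm\cdot{k_1-\ell}$ and $\nrm\cdot{k_2}$ plus Gr\"onwall, decays like $(1+t)^{-(k_2-k_1)/(2\ell)}$ from $\mathrm L^2(\bangle v^{k_2}\d x\ddmu)$ to $\mathrm L^2(\bangle v^{k_1}\d x\ddmu)$. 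Choosing $k_2>k_1+2\,\ell$ makes this rate time-integrable, and Duhamel's formula $e^{t(\sfL-\sfT)}=e^{t\sfB}+e^{t\sfB}\star\,\sfC\,e^{t(\sfL-\sfT)}$, combined with the unweighted $\mathrm L^2$ contractivity of the full semigroup, then gives the uniform-in-time bound $\nrm{f(t)}k\le\mathcal K_k\nrm{f^\init}k$ with no extra moment required of $f^\init$. With that proposition in hand, the remainder of your argument goes through as written (your remainder terms $R[f]$ must of course be estimated against the \emph{weighted} norm $\nrm{(\Id-\sfPi)f}{-\beta}$, which works because $\M$ has all polynomial moments); without it, the interpolation step is, as you say, vacuous, so the moment propagation is the missing ingredient rather than a routine energy estimate.
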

The proof relies on the $\mathrm L^2$-hypocoercivity approach of~\cite{MR2576899,MR3324910}. An important ingredient is 
\emph{microscopic coercivity,} meaning that the entropy dissipation controls the distance to the set of local equilibria. For 
$\sfL=\sfL_1$ and for the exponential and super-exponential cases $\gam\ge1$, this control is provided by the \emph{Poincar\'e inequality}
\be{Poincare}
  \rint \left| \nabla_v g\right|^2 F\,\dd v \ge  \C_P \rint \left(g - \overline{g}\right)^2 F\,\dd v \,,
\ee
with $\overline{g} = \rint gF\dd v$ and $\C_P>0$ implying, with $g=f/F$,
\[
   -\bangle{\sfL_1 f,f} \ge \C_P\,\|f-\rho_f F\|^2 \,,
\] 
with $\rho_f = \rint f\,dv$.
For this case the result of Theorem~\ref{th:main} (with $k=0$) has been proven in~\cite{hal-01575501}. For the sub-exponential case
of this work we shall prove a relaxed version.
\begin{lemma}\label{lem:micro-coerc}
Let $F$ be given by~\eqref{SubExponential} with $0<\alpha<1$. Let either $\sfL=\sfL_1$ and $\beta=2(1-\alpha)$ or
$\sfL=\sfL_2$ assuming~\eqref{hyp:b_mass},~\eqref{hyp:b_bounds}. Then there exists $\C>0$ such that
\[
     -\bangle{\sfL f,f} \ge \C\,\|f-\rho_f F\|_{-\beta}^2 \qquad\qquad \forall\,f\in\mathcal D(\R^{2d}) \,.
\]
\end{lemma}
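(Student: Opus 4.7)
The strategy is to reduce both cases to a functional inequality for a function of $v$ alone, since the claimed estimate is pointwise in $x$. Set $h := f/F$, so that $\rho_f = \rint hF\dd v$ (using $\rint F\dd v=1$) and
\[
\|f-\rho_f F\|_{-\beta}^2 = \rint (h-\rho_f)^2 \bangle v^{-\beta} F(v)\dd v.
\]
The problem thus amounts to bounding this quantity from above by the relevant dissipation.

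For $\sfL=\sfL_1$, identity~\eqref{L1-diss} gives $-\bangle{\sfL_1 f,f}=\rint|\Dv h|^2 F\dd v$, and the assertion becomes the \emph{weighted Poincar\'e inequality}
\[
\rint (h-\bar h)^2 \bangle v^{-2(1-\alpha)} F\dd v \le C \rint |\Dv h|^2 F\dd v, \qquad \bar h := \rint hF\dd v,
\]
for the sub-exponential probability measure $F\dd v$. The weight $\bangle v^{-\beta}$ with $\beta=2(1-\alpha)$ compensates exactly for the tail decay of $F$ that precludes a standard spectral gap. Such inequalities are classical for sub-exponential measures (Bobkov--Ledoux, R\"ockner--Wang, Barthe--Cattiaux--Roberto); one could either invoke such a result or derive it here via a Muckenhoupt-type criterion in one dimension combined with tensorisation.

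For $\sfL=\sfL_2$, rewriting~\eqref{L2-diss} in terms of $h$ yields
\[
-\bangle{\sfL_2 f,f}=\tfrac12\riint \bb(v,v')F(v)F(v')(h(v)-h(v'))^2 \dd v\dd v',
\]
and the lower bound in~\eqref{hyp:b_bounds} gives
\[
-\bangle{\sfL_2 f,f} \ge \tfrac{\underline\bb}{2}\riint \bangle v^{-\beta}\bangle{v'}^{-\beta}F(v)F(v')(h(v)-h(v'))^2 \dd v\dd v'.
\]
To bridge this with $\|f-\rho_f F\|_{-\beta}^2$, write $h(v)-\rho_f=\rint(h(v)-h(v'))F(v')\dd v'$ and apply Cauchy-Schwarz with the splitting $F(v')=\bigl(\bangle{v'}^{-\beta/2}F(v')^{1/2}\bigr)\bigl(\bangle{v'}^{\beta/2}F(v')^{1/2}\bigr)$, obtaining
\[
(h(v)-\rho_f)^2 \le \Bigl(\rint\bangle{v'}^\beta F(v')\dd v'\Bigr)\rint (h(v)-h(v'))^2 \bangle{v'}^{-\beta}F(v')\dd v'.
\]
The first factor is a polynomial moment of $F$, finite by sub-exponential decay. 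Multiplying by $\bangle v^{-\beta}F(v)$, integrating in $v$, and comparing with the previous display concludes the scattering case.

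\textbf{Main obstacle.} The scattering half is elementary once the Cauchy-Schwarz trick is identified: the polynomial weight is transferred onto the squared difference at the price of a finite moment of $F$. The genuine difficulty lies in the weighted Poincar\'e inequality invoked for $\sfL_1$. Although well-known in principle, its proof for the precise exponent $\beta=2(1-\alpha)$ is delicate, and it is at this point that the specific shape $F\propto e^{-\bangle v^\alpha}$ with $\alpha\in(0,1)$ enters quantitatively; this is the heart of the statement.
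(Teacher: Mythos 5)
Your scattering case is exactly the paper's argument: write $f-\rho_f F=\rint\big(fF'-f'F\big)\dd v'$, apply Cauchy--Schwarz with the splitting that transfers the weight $\bangle{v'}^{-\beta}$ onto the squared difference, and combine \eqref{L2-diss} with the lower bound in \eqref{hyp:b_bounds}; the resulting constant is $\tfrac{\underline\bb}{2}\big(\rint\bangle v^{\beta}F\dd v\big)^{-1}$, finite by the sub-exponential decay, just as you say. No issue there.

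The Fokker--Planck half is where your proposal has a genuine gap. Like the paper, you reduce the claim to the weighted Poincar\'e inequality \eqref{weightedPoincare}, but the paper then proves this inequality in Appendix~\ref{Appendix:A} (a spectral argument: Persson's theorem applied to the Schr\"odinger operator obtained from $h=w\,e^{\phi/2}$, followed by an IMS localization in Corollary~\ref{Cor:wPoincare} to handle the fact that the subtracted average is taken with respect to $F\dd v$ rather than the weighted measure $\bangle v^{-\beta}F\dd v$, so the right-hand side is not a variance and the inequality is a priori stronger than the textbook variance form). You state the inequality with the correct average $\bar h=\rint hF\dd v$, but the classical references you invoke give at best the variance form with respect to the weighted measure; importing such a result requires an extra step to change the average (doable via Cauchy--Schwarz since $\rint\bangle v^{\beta}F\dd v<\infty$, but it must be said). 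More seriously, the concrete derivation you sketch --- a one-dimensional Muckenhoupt criterion plus tensorisation --- does not apply here: $F\propto e^{-\bangle v^{\alpha}}$ is radial, not a product measure, and it is not a bounded perturbation of the product density $e^{-\sum_i\bangle{v_i}^{\alpha}}$ (the ratio of the two densities is unbounded, e.g.\ along diagonal directions), so neither tensorisation nor a Holley--Stroock perturbation transfers the one-dimensional inequality. Since you yourself identify this weighted Poincar\'e inequality as the heart of the statement, resting it on a citation that does not match the exact form needed and on a derivation route that fails leaves the Fokker--Planck case unproved; you should either cite a result in precisely this form (the paper notes the equivalence with an inequality of the Kavian--Mischler type) or supply an argument such as the paper's Persson/IMS proof.
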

\begin{proof}
For $\sfL=\sfL_1$ the result is a consequence of the \emph{weighted Poincar\'e inequality}
\be{weightedPoincare}
  \intw{|\nabla_v g|^2 \M}\mu \ge \C\intw{(g-\overline g)^2\, \bangle v^{-2\,(1-\gam)}\M}v \qquad
  \forall\,g\in\mathcal D(\R^d)\,,
\ee
which will be proved in Appendix~\ref{Appendix:A}.

For $\sfL=\sfL_2$ we estimate
\begin{align*}
  &\rint (f-\rho_f F)^2 \bangle{v}^{-\beta} \dd\mu = \rint \left( \rint \big(fF'-f'F\big)\,F'\dd\mu' \right)^2 \bangle{v}^{-\beta}\dd\mu \\
  & \le \rint F\bangle{v}^\beta \dd v \iintw{\big(fF'-f'F\big)^2 \bangle{v}^{-\beta} \bangle{v'}^{-\beta}}{\mu \dd\mu'} 
  \le - \frac{1}{\C} \rint (\sfL_2 f)f \,\dd\mu
\end{align*}
with 
\[
  \C = \frac{\underline \bb}{2} \left( \rint F\bangle{v}^\beta \dd v\right)^{-1}\,.
\]
For the first inequality we have used Cauchy-Schwarz and for the second,~\eqref{L2-diss} and the hypothesis 
\eqref{hyp:b_bounds}. Integration with respect to $x$ completes the proof.
\end{proof}

Apart from proving the weighted Poincar\'e inequality~\eqref{weightedPoincare} (in Appendix~\ref{Appendix:A}), we shall 
also show (in Appendix~\ref{Appendix:B}) how it can be used to prove algebraic decay to equilibrium for the spatially
homogeneous equation with $\sfL=\sfL_1$ and $0<\alpha<1$, \emph{i.e.} the Fokker-Planck equation with sub-exponential
equilibrium. The loss of information due to the weight $\bangle{v}^{-2(1-\alpha)}$ has to be compensated
by a $\mathrm L^2$-bound for the initial datum with a weight $\bangle{v}^k$, $k>0$, as in Theorem~\ref{th:main}.
For this problem, estimates based on \emph{weak Poincar\'e inequalities} are also very popular in the scientific community of semi-group theory and Markov processes (see~\cite{MR1856277},~\cite[Proposition~7.5.10]{MR3155209},~\cite{hal-01241680} and Appendix~\ref{Appendix:B}). Estimates based on weak Poincar\'e inequalities rely on a uniform bound for the initial data for $\gam<1$ which is not needed for $\gam\ge1$, while the approach developed in this paper 
provides a continuous transition from the range $0<\alpha<1$ to the range $\alpha\ge1$ since we may choose $k\searrow 0$ as $\gam\nearrow 1$. Note that for $\gam=1$, the weighted Poincar\'e inequality~\eqref{weightedPoincare} reduces to the Poincar\'e inequality~\eqref{Poincare}.

The proof of Theorem~\ref{th:main} follows along the lines of the hypocoercivity approach of~\cite{MR2576899,MR3324910} and its extension to cases without confinement as in~\cite{hal-01575501,bouin:hal-01991665}. It combines information on the 
\emph{microscopic} and the \emph{macroscopic} dissipation properties. The essence of the microscopic information is
given in Lemma~\ref{lem:micro-coerc}. Since the macroscopic limit of~\eqref{eq:main} is the heat equation on the whole
space, it is natural that for the estimation of the macroscopic dissipation we use \emph{Nash's inequality,} a tool 
which has been developed for this purpose. The result of Theorem~\ref{th:main} can be interpreted as giving the weaker of the microscopic decay rate $t^{-k/\beta}$ and the macroscopic decay rate $t^{-d/2}$. Only for $k\ge \beta\,d/2$ the decay rate of the macroscopic diffusion limit is recovered.

Related results have been shown in~\cite{hal-01575501,bouin:hal-01991665,hal-01697058}, where the latter is somewhat 
complementary to this work dealing with Gaussian local equilibria in the presence of an external potential with 
sub-exponential growth in the variable~$x$. Also see~\cite{Wang_2015,MR3744403,MR3892316} for various earlier results dealing with external potentials with a growth like $\bangle x^\gamma$, $\gamma<1$, based on weak Poincar\'e inequalities, spectral techniques, $\mathrm H^1$ hypocoercivity methods, \emph{etc}.

This paper is organized as follows. In Section~\ref{Sec:Entropy}, we prove an hypocoercive estimate relating a modified
entropy, which is equivalent to $\|f\|^2$, to an entropy production term involving a microscopic and a macroscopic component. Using weighted $\mathrm L^2$-estimates established in Section~\ref{Sec:wL2}, we obtain a new control by the microscopic component in Lemma~\ref{lem:Psi} while the macroscopic component is estimated as in~\cite{hal-01575501} using Nash's inequality, see Lemma~\ref{lem:Phi}. By collecting these estimates in Section~\ref{Sec:Proof}, we complete the proof of Theorem~\ref{th:main}. Two appendices are devoted to $\sfL=\sfL_1$: in Appendix~\ref{Appendix:A} we provide a new proof of~\eqref{weightedPoincare} and comment on the interplay with weak Poincar\'e inequalities, while the spatially homogeneous version of~\eqref{eq:main} is dealt with in Appendix~\ref{Appendix:B} and rates of relaxation towards the local equilibrium are discussed using weighted $\mathrm L^2$-norms, as an alternative approach to the weak Poincar\'e inequality method of~\cite{hal-01241680}. The main novelty of our approach is that we use new interpolations in order to exploit the entropy production term. As a consequence, with the appropriate weights, no other norm is needed than weighted $\mathrm L^2$-norms. For simplicity, we assume that the distribution function is nonnegative but the extension to sign changing functions is straightforward.

\section{An entropy--entropy production estimate}\label{Sec:Entropy}

We adapt the strategy of~\cite{MR3324910,hal-01575501}, denoting by $\op{T} = v\cdot\nabla_x$ the free streaming 
operator and by $\sfPi$ the orthogonal projection on $\mathrm{Ker}(\op L)$ in $\mathrm L^2(\R^d,\dmu)$, given by
\[
\sfPi f:=\rho_f\,\M\qquad\mbox{where }\rho_f=\rint f\dd v\,.
\]
To build a suitable Lyapunov functional, we introduce the operator
\[
\op A :=\big(\Id+(\op{T\Pi})^*(\op{T\Pi})\big)^{-1}(\op{T\Pi})^*
\]
and consider
\[
\sfH[f] :=\frac12\,\|f\|^2+\delta\,\scalar{\sfA f}f\,.
\]
It is known from~\cite[Lemma 1]{MR3324910} that, for any $\cst\in (0,1)$, $\sfH[f]$ and $\|f\|^2$ are equivalent in the sense that
\be{H-norm}
\frac12\,(1-\delta)\,\|f\|^2\le\sfH[f]\le\frac12\,(1+\delta)\,\|f\|^2\,.
\ee
A direct computation shows that
\be{E-EP}
\dt\sfH[f]=-\,\op D[f]
\ee
with
\begin{align*}\label{def:D}
\op D[f]:=&-\,\scalar{\sfL f}f+\cst\,\scalar{\op{AT\Pi}f}{\op\Pi f}\\
&+\cst\,\scalar{\op A\op T(\Id-{\op\Pi})f}{\op\Pi f}-\,\cst\,\left\langle\op{TA}(\Id-\op\Pi)f,(\Id-\op\Pi)f\right\rangle-\cst\,\scalar{\op{AL}(\Id-\op\Pi)f}{\op\Pi f}
\end{align*}
where we have used that $\left\langle\op Af,\op Lf\right\rangle=0$. Note that in terms of the new notation the result of 
Lemma~\ref{lem:micro-coerc} reads
\be{eq:term_1}
\bangle{\sfL f,f}\le-\,\C\,\nrm{(\Id-\sfPi)f}{-\beta}^2\,.
\ee
\begin{proposition}\label{prop:entropy_ineq} Under the assumptions of Theorem~\ref{th:main} and for small enough
$\delta>0$, there exists $\kappa>0$ such that, for any $f\in\mathrm L^2\(\bangle v^{-\beta}\dd x\ddmu\)\cap\mathrm L^1(\d x\dd v)$,
\[
\op D[f]\ge\kappa\(\nrm{(\Id-\sfPi)f}{-\beta}^2+\scalar{\op{AT\Pi}f}{\op\Pi f}\)\,.
\]
\end{proposition}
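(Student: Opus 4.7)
The plan is to use the explicit formula for $\op D[f]$ displayed just before the statement and to sort its five terms into two groups. The two \emph{good} terms are $-\scalar{\sfL f}{f}$, which by~\eqref{eq:term_1} dominates $\C\,\n{(\Id-\sfPi)f}_{-\beta}^2$, and $\cst\,\scalar{\op{AT\Pi}f}{\sfPi f}$, which is exactly the second contribution sought on the right-hand side. It then suffices to prove that the three remaining terms,
\[
R_1 := \cst\,\scalar{\op A\op T(\Id-\sfPi)f}{\sfPi f},\quad R_2 := -\cst\,\scalar{\op{TA}(\Id-\sfPi)f}{(\Id-\sfPi)f},\quad R_3 := -\cst\,\scalar{\op{AL}(\Id-\sfPi)f}{\sfPi f},
\]
can be bounded by $C\,\cst\bigl(\n{(\Id-\sfPi)f}_{-\beta}^2+\scalar{\op{AT\Pi}f}{\sfPi f}\bigr)$ with $C$ independent of $\cst$, so that the two good terms absorb $|R_1|+|R_2|+|R_3|$ as soon as $\cst$ is small enough.

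My plan for each $R_j$ is to apply Cauchy--Schwarz in the duality between the weighted norms $\n{\cdot}_\beta$ and $\n{\cdot}_{-\beta}$, and then to invoke the weighted $\mathrm L^2$ estimates on $\op A$, $\op{TA}$, $\op A\op T(\Id-\sfPi)$ and $\op{AL}(\Id-\sfPi)$ developed in Section~\ref{Sec:wL2}. For $R_2$ this directly gives
\[
|R_2|\le\cst\,\n{\op{TA}(\Id-\sfPi)f}_\beta\,\n{(\Id-\sfPi)f}_{-\beta}\le C\,\cst\,\n{(\Id-\sfPi)f}_{-\beta}^2,
\]
which is absorbed into $\C\,\n{(\Id-\sfPi)f}_{-\beta}^2$. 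For $R_1$ and $R_3$, which pair a microscopic factor with $\sfPi f$, I would use a Young inequality $|R_j|\le\tfrac{\eta}{2}\,\cst\,\scalar{\op{AT\Pi}f}{\sfPi f}+\tfrac{\cst}{2\eta}\,\n{E_j}_\ast^2$ with $E_j=\op A\op T(\Id-\sfPi)f$ or $\op{AL}(\Id-\sfPi)f$, where $\n{\cdot}_\ast$ denotes the norm dual to the quadratic form $g\mapsto\scalar{\op{AT\Pi}g}{\sfPi g}$ on $\mathrm{Ker}(\sfL)$; the weighted $\mathrm L^2$ estimates of Section~\ref{Sec:wL2} again yield $\n{E_j}_\ast^2\le C\,\n{(\Id-\sfPi)f}_{-\beta}^2$. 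Choosing $\eta$ of order one and then $\cst$ so small that $C\,\cst\le \C/2$ and the coefficient in front of $\scalar{\op{AT\Pi}f}{\sfPi f}$ remains positive yields $\op D[f]\ge\kappa\bigl(\n{(\Id-\sfPi)f}_{-\beta}^2+\scalar{\op{AT\Pi}f}{\sfPi f}\bigr)$ with $\kappa=\min\{\C,\cst\}/2$.

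The main obstacle is establishing the weighted operator estimates that underlie the Cauchy--Schwarz steps, namely $\n{\op{TA}(\Id-\sfPi)f}_\beta\lesssim\n{(\Id-\sfPi)f}_{-\beta}$, $\n{\op A\op T(\Id-\sfPi)f}_\ast\lesssim\n{(\Id-\sfPi)f}_{-\beta}$ and $\n{\op{AL}(\Id-\sfPi)f}_\ast\lesssim\n{(\Id-\sfPi)f}_{-\beta}$. Since $\op A h$ is of the form $h_0(x)\,F(v)$ with $h_0$ solving a Poisson-type problem in $x$ driven by a velocity average of $h$, the operators $\op T$ and $\op L$ reintroduce $v$-dependent factors that cost powers of $\bangle v$; controlling these in $\n{\cdot}_\beta$ on the left forces the use of polynomial moments $\int\bangle v^{2+\beta}F\dd v<\infty$, which is only possible thanks to the sub-exponential decay of~$F$. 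For the $\op{AL}$ term there is the extra subtlety that $\op L$ itself carries a collision frequency of order $\bangle v^{-\beta}$ by~\eqref{nu1}--\eqref{nu2-est}, and the calibration $\beta=2(1-\alpha)$ in the Fokker--Planck case is precisely what makes this weight match the deficit in microscopic coercivity given by~\eqref{eq:term_1}. Once Section~\ref{Sec:wL2} provides these weighted bounds, the assembly described above is essentially mechanical.
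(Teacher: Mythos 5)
Your overall assembly is the same as the paper's: keep the two good terms $-\scalar{\sfL f}f$ (controlled from below via~\eqref{eq:term_1}) and $\cst\,\scalar{\op{AT\Pi}f}{\op\Pi f}$, bound the three cross terms, and absorb them by Young's inequality for $\cst$ small; that final step is indeed mechanical. The genuine gap is that the weighted operator estimates on which everything rests are nowhere proved: you ``invoke the weighted $\mathrm L^2$ estimates on $\op A$, $\op{TA}$, $\op A\op T(\Id-\sfPi)$ and $\op{AL}(\Id-\sfPi)$ developed in Section~\ref{Sec:wL2}'', but Section~\ref{Sec:wL2} contains no such estimates --- it only proves propagation in time of the norms $\nrm{f(t,\cdot,\cdot)}{k}$ along the semigroup (Proposition~\ref{prop:propag_L2_m}), which plays no role in this proposition. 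The bounds $\nrm{\op{TA}(\Id-\sfPi)f}{\beta}\le\C_2\,\nrm{(\Id-\sfPi)f}{-\beta}$ and the control of $\scalar{\op A\op T(\Id-\sfPi)f}{\op\Pi f}$ and $\scalar{\op{AL}(\Id-\sfPi)f}{\op\Pi f}$ by $\nrm{(\Id-\sfPi)f}{-\beta}\,\scalar{\op{AT\Pi}f}{\op\Pi f}^{1/2}$ are precisely the substance of the proof and must be derived here. Your device for the mixed terms, a norm $\n{\cdot}_\ast$ ``dual to the quadratic form $g\mapsto\scalar{\op{AT\Pi}g}{\op\Pi g}$'', is never given a meaning, and making it usable requires exactly the missing ingredient: the elliptic representation~\eqref{eq:u}, $u-\Theta\,\Delta u=\rho_f$, together with the identity~\eqref{eq:ATPi}, $\scalar{\op{AT\Pi}f}{\op\Pi f}=\Theta\,\nrm{\nabla u}{\mathrm L^2(\dd x)}^2+\Theta^2\,\nrm{\Delta u}{\mathrm L^2(\dd x)}^2$, which is what allows one to pass adjoints onto $\op\Pi f$ (via $\op A^*\op\Pi f=\op T u\,\M$) and bound the result by $\scalar{\op{AT\Pi}f}{\op\Pi f}^{1/2}$ after a weighted Cauchy--Schwarz.

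Your closing paragraph does point at the right mechanism (the range of $\op A$ consists of macroscopic states $h_0(x)\,\M(v)$ with $h_0$ solving an elliptic problem; polynomial moments of $\M$; the calibration $\beta=2\,(1-\gam)$), but none of the computations are carried out, and they are not all routine: for the $\op{TA}$ term one tests $w-\Theta\,\Delta w=-\,\Dx\cdot\rint v\,f\dd v$ with $w$ and uses a weighted Cauchy--Schwarz giving the moment $\Theta_{\beta+2}$, while the $\op{AT}$ term costs moments of order $\beta+4$, not the $\beta+2$ you quote; and for the $\op{AL}$ term with $\sfL=\sfL_2$ one must check finiteness of the quantity $\mathcal B$ built from $\rint\bb(v',v)\,(v'-v)\,\M'\dd v'$, which needs the upper bound in~\eqref{hyp:b_bounds} and a splitting of the integral at $|v-v'|=1$; your sketch only addresses the Fokker--Planck calibration. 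As it stands the proposal is a correct skeleton whose load-bearing steps (the paper's Steps 1--4) are asserted rather than proved, and attributed to a section that does not contain them.
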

Note that $\kappa$ does not depend on $k>0$ (the parameter $k$ appears in the assumptions of Theorem~\ref{th:main}). 
An estimate of $\op D[f]$ in terms of $\scalar{\op{AT\Pi}f}{\op\Pi f}$ and $\nrm{(\Id-\sfPi)f}{}^2$ has already been derived in~\cite{hal-01575501}, but using the weighted norm $\nrm{(\Id-\sfPi)f}{-\beta}$ is a new idea.
\begin{proof} We have to prove that the three last terms in $\op D[f]$ are controlled by the first two. The main difference with~\cite{MR3324910,hal-01575501} is the additional weight $\bangle v^{-\beta}$ in the velocity variable.

\step{Step 1: rewriting $\scalar{\op{AT\Pi}f}{\op\Pi f}$.}
Let $u=u_f$ be such that
\[
u\,\M=\(\Id+(\op{T\Pi})^*(\op{T\Pi})\)^{-1}\op\Pi f\,.
\]
Then $u$ solves $(u-\Theta\,\Delta u)\,\M=\sfPi f$, that is,
\be{eq:u}
u-\Theta\,\Delta u=\rho_f \,,
\ee
where $\Theta:=\rint|v\cdot\mathsf e|^2\,\M(v)\dd v$ for an arbitrary unit vector $\mathsf e$. Since
\begin{align*}
\op{AT\Pi}f &=\(\Id+(\op{T\Pi})^*(\op{T\Pi})\)^{-1}(\op{T\Pi})^*(\op{T\Pi})\,\op\Pi f\\
&=\big(\Id+(\op{T\Pi})^*(\op{T\Pi})\big)^{-1}\big(\Id+(\op{T\Pi})^*(\op{T\Pi})-\Id\big)\,\op\Pi f\\
&=\op\Pi f-\big(\Id+(\op{T\Pi})^*(\op{T\Pi})\big)^{-1}\,\op\Pi f=\sfPi f-u\,\M=(\rho_f-u)\,\M\,,
\end{align*}
then by using equation~\eqref{eq:u}, we obtain
\[
\scalar{\op{AT\Pi}f}{\op\Pi f}=\scalar{\op\Pi f-u\,\M}{\op\Pi f}=\scalar{-\Theta\,\Delta u\,\M}{(u-\Theta\,\Delta u)\,\M}\,,
\]
from which we deduce
\be{eq:ATPi}
\scalar{\op{AT\Pi}f}{\op{\Pi}f}=\Theta\nrm{\nabla u}{\mathrm L^2(\dd x)}^2+\Theta^2\nrm{\Delta u}{\mathrm L^2(\dd x)}^2 \ge 0\,.
\ee

\step{Step 2: a bound on $\scalar{\op A\op T(\Id-{\op\Pi})f}{\op\Pi f}$.}
If $u$ solves~\eqref{eq:u}, we use the fact that
\be{eq:A*}
\sfA^*\sfPi f=\sfT\sfPi\,u\,\M=\sfT u\,\M
\ee
to compute
\[
\scalar{\op A\op T(\Id-\op\Pi)f}{\op\Pi f}=\scalar{(\Id-\op\Pi)f}{\sfT^*\op A^*\op\Pi f}=\scalar{(\Id-\op\Pi)f}{\sfT^*\sfT u\,\M}\,.
\]
Therefore, since $\op T^*\op{T}u\,\M=-\,v \cdot \Dx\(v \cdot\Dx u\)\M$, the Cauchy-Schwarz inequality yields
\begin{multline*}
\left| \scalar{\op A\op T(\Id-\op\Pi)f}{\op\Pi f}\right| \le\nrm{(\Id-\sfPi)f}{-\beta}\,\nrm{\sqrt\M\,\bangle v^{\frac\beta2} v \cdot \Dx\(v \cdot\Dx u\)\,\sqrt\M\,}{\mathrm L^2(\d x\dd v)}\\
\le\Theta_{\beta+4}\,\nrm{(\Id-\sfPi)f}{-\beta}\,\nrm{\Delta u}{\mathrm L^2(\d x)}\,,
\end{multline*}
hence
\be{eq:term_3}
\left| \scalar{\op A\op T(\Id-\op\Pi)f}{\op\Pi f}\right|\le\C_4\,\nrm{(\Id-\sfPi)f}{-\beta}\,\scalar{\op{AT\Pi}f}{\op{\Pi}f}^\frac12
\ee
where we have used identity~\eqref{eq:ATPi}, $\C_4=\Theta_{\beta+4}/\Theta$ and
\[
\Theta_k:=\rint\bangle v^k\,\M(v)\dd v\,.
\]
With this convention, note that $\Theta_2=1+d\,\Theta$.

\step{Step 3: estimating $\left\langle\op{TA}(\Id-\op\Pi)f,(\Id-\op\Pi)f\right\rangle$.}
As noted in~\cite[Lemma 1]{MR3324910}, the equation $g=\sfPi g=\sfA f$ is equivalent to
\[
\big(\Id+(\op{T\Pi})^*(\op{T\Pi})\big)g=(\op{T\Pi})^*f
\]
which, after multiplying by $g$ and integrating, yields
\begin{align*}
\n g^2+\n{\op Tg}^2&=\bangle{g, g+(\op{T\Pi})^*(\op{T\Pi}) g}\\
&=\bangle{g,(\op{T\Pi})^*f}=\bangle{\sfT\sfPi g,f}=\bangle{\sfT\sfA f,f}\le\nrm{(\Id-\op\Pi)f}{-\beta}\,\nrm{\sfT\sfA f}\beta
\end{align*}
by the Cauchy-Schwarz inequality. We know that $(\op{T\Pi})^*=-\,\sfPi\sfT$ so that $\sfA f=g=w\,\M$ is determined by the equation
\[
w-\Theta\,\Delta w=-\,\Dx\cdot\rint v\,f\dd v\,.
\]
After multiplying by $w$ and integrating in $x$, we obtain that
\begin{multline*}
\Theta\rint|\Dx w|^2\dd x\le\rint|w|^2\dd x+\Theta\rint|\Dx w|^2\dd x\\
\le\(\rint|\Dx w|^2\dd x\)^\frac12\(\rint\left|{\textstyle\rint v\,f\dd v}\right|^2\dd x\)^\frac12
\end{multline*}
and note that
\begin{multline*}
\rint\left|{\textstyle\rint v\,f\dd v}\right|^2\dd x=\rint\left|\rint\bangle v^{-\frac\beta2}\,\frac{(\Id-\op\Pi)f}{\sqrt\M}\cdot|v|\,\bangle v^\frac\beta2\,\sqrt\M\dd v\right|^2\dd x\\
\le\Theta_{\beta+2}\,\nrm{(\Id-\op\Pi)f}{-\beta}^2
\end{multline*}
by the Cauchy-Schwarz inequality. Hence
\[
\rint|\Dx w|^2\dd x\le\frac{\Theta_{\beta+2}}{\Theta^2}\,\nrm{(\Id-\op\Pi)f}{-\beta}^2
\]
and
\[
\nrm{\sfT\sfA f}\beta^2=\n{\Dx w\cdot(v\,\bangle v^{\beta/2}\,\M)}^2=\Theta_{\beta+2}\rint|\Dx w|^2\dd x\le\C_2^2\,\nrm{(\Id-\op\Pi)f}{-\beta}^2
\]
with $\C_2:=\Theta_{\beta+2}/\Theta$. Since $g=\op A f$ so that $\n{\sfA f}^2+\n{\sfT\sfA f}^2=\n g^2+\n{\sfT g}^2$, we obtain that
\be{eq:term_4}
\bangle{\sfT\sfA f,f}=\left\langle\op{TA}(\Id-\op\Pi)f,(\Id-\op\Pi)f\right\rangle\le\nrm{(\Id-\op\Pi)f}{-\beta}\,\nrm{\sfT\sfA f}\beta\le\C_2\,\nrm{(\Id-\sfPi)f}{-\beta}^2\,.
\ee
We also remark that
\begin{multline*}
\bangle{\sfT\sfA f,f}=\scalar{(v\cdot\Dx w)\,\M}f=\rint\Dx w\cdot\(\rint v\,f\dd v\)\dd x\\
=\rint|w|^2\dd x+\Theta\rint|\Dx w|^2\dd x\ge0\,.
\end{multline*}

\step{Step 4: bound for $\scalar{\op{AL}(\Id-\op\Pi)f}{\op\Pi f}$.}
We use again identity~\eqref{eq:A*} to compute
\begin{multline*}
\left|\scalar{\op{AL}(\Id-\op\Pi)f}{\op\Pi f}\right|=\left|\bangle{(\Id-\op\Pi)f,\op{L^*A^*}\sfPi f}\right|=\left|\bangle{(\Id-\op\Pi)f,\op{L^*T}u\,\M}\right|\\\le\nrm{(\Id-\sfPi)f}{-\beta}\,\nrm{\op{L^*T}u\,\M}\beta\,.
\end{multline*}
In case $\sfL=\sfL_1$ we remark that
\begin{multline*}
\nrm{\op{L_1^*T}u\,\M}\beta^2=\riint\left|\Dv\cdot\big(\M\,\Dv\(v\cdot\Dx u\)\big)\right|^2\,\bangle v^\beta\d x\ddmu\\
=\riint|\Dv\M\cdot\Dx u|^2\,\bangle v^\beta\dd x\ddmu\le\nrm{\Dv\M}{\mathrm L^2\(\bangle v^\beta\dmu\)}^2\,\nrm{\Dx u}{\mathrm L^2(\dd x)}^2\,.
\end{multline*}
In case $\sfL=\sfL_2$, note first that
\[
(\op{L}_2^*\op{T}u\,\M)(v)=\(\rint\bb(v',v)\,(v'-v)\,\M(v')\dd v'\)\cdot\Dx u\,\M(v) \,,
\]
and thus, by the Cauchy-Schwarz inequality,
\[
\nrm{\op{L_2^*T}u\,\M}\beta\le\mathcal B\,\nrm{\Dx u}{\mathrm L^2(\dd x)}\,,\qquad\mbox{with}\quad\mathcal B=\nrm{\rint\bb(v',v)\,(v'-v)\,\M'\,\M\dd v'}{\mathrm L^2\(\bangle v^\beta\dmu\)} \,.
\]
For proving finiteness of $\mathcal B$ we use~\eqref{hyp:b_bounds} in
\begin{multline*}
   \left| \rint\bb(v',v)\,(v'-v)\,\M'\dd v' \right|\\
   \le \overline\bb \int_{|v'-v|<1} |v'-v|^{1-\gamma} \M'\dd v'
   + \overline\bb \int_{|v'-v|>1} |v'-v|^{1-\beta} \M'\dd v'
   \le c\left(1 + \bangle{v}^{1-\beta}\right) \,,
\end{multline*}
which implies
\[
   \mathcal{B}^2 \le c^2 \rint \left(1 + \bangle{v}^{1-\beta}\right)^2 \bangle{v}^\beta F \dd v <\infty \,.
\]
Combining these estimates with identity~\eqref{eq:ATPi} we get
\be{eq:term_5}
\left|\scalar{\op{AL}(\Id-\op\Pi)f}{\op\Pi f}\right|\le\C_\M\,\nrm{(\Id-\sfPi)f}{-\beta}\,\scalar{\op{AT\Pi}f}{\op{\Pi}f}^\frac12
\ee
where $\C_\M=\mathcal B/\sqrt\Theta$.

\step{Step 5: collecting all estimates.}
Altogether, combining~\eqref{eq:term_1} and~\eqref{eq:term_3},~\eqref{eq:term_4} and~\eqref{eq:term_5}, we obtain
\begin{multline*}
\dt\sfH[f]\le-\,\C\,\nrm{(\Id-\sfPi)f}{-\beta}^2-\cst\,\scalar{\op{AT\Pi}f}{\op\Pi f}\\
+\cst\(\C_4+\C_\M\)\nrm{(\Id-\sfPi)f}{-\beta}\,\scalar{\op{AT\Pi}f}{\op\Pi f}^\frac12+\cst\,\C_2\,\nrm{(\Id-\sfPi)f}{-\beta}^2
\end{multline*}
which by Young's inequality yields the existence of $\kappa>0$ such that
\[
\dt\sfH[f]\le-\,\kappa\(\nrm{(\Id-\sfPi)f}{-\beta}^2+\scalar{\op{AT\Pi}f}{\op\Pi f}\)
\]
for some $\delta\in(0,1)$. Indeed, with $X:=\nrm{(\Id-\sfPi)f}{-\beta}$ and $Y:=\scalar{\op{AT\Pi}f}{\op\Pi f}^\frac12$, it is enough to check that the quadratic form
\[
\mathcal Q(X,Y):=(\C-\delta\,\C_2)\,X^2-(\C_4+\C_\M)\,X\,Y+\delta\,Y^2
\]
is positive, \emph{i.e.}, $\mathcal Q(X,Y)\ge\kappa\,(X^2+Y^2)$ for some $\kappa=\kappa(\delta)$ and $\delta\in(0,1)$. \end{proof}

\section{Weighted \texorpdfstring{$\mathrm L^2$}{L2} estimates}\label{Sec:wL2}

In this section, we show the propagation of weighted norms with weights $\bangle v^k$ of arbitrary positive order $k\in\R^+$.
\begin{proposition}\label{prop:propag_L2_m} Let $k>0$ and $f$ be solution of~\eqref{eq:main} with $f^\init\in\mathrm L^2(\bangle v^k\dd x\ddmu)$. Then there exists a constant $\mathcal K_k>1$ such that
\[
\forall\,t\ge0\,\quad\nrm{f(t,\cdot,\cdot)}k\le\mathcal K_k\,\nrm{f^\init}k\,.
\]
\end{proposition}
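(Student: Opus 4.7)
The plan is to compute $\dt\|f(t)\|_k^2$ directly from~\eqref{eq:main}, exploit the sub-exponential decay of $\M$ in the collision contribution, and close a Grönwall-type inequality. The transport term contributes $\iintw{(v\cdot\Dx f)f\bangle v^k}{x\dmu}=0$ after integration by parts in~$x$, since the weight $\bangle v^k$ does not depend on~$x$. So it remains to control $\iintw{(\sfL f)f\bangle v^k}{x\dmu}$.

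For the Fokker-Planck case $\sfL=\sfL_1$, setting $u:=f/\M$ and integrating by parts twice in~$v$ yields
\[
\rint(\sfL_1 f)\,f\,\bangle v^k\,\dmu = -\rint\M\,|\Dv u|^2\,\bangle v^k\,\dd v+\tfrac12\rint f^2\,\Phi_k(v)\,\dmu,
\]
where $\Phi_k(v):=\M^{-1}\Dv\cdot(\M\Dv\bangle v^k)$. Using $\Dv\M=-\alpha\bangle v^{\alpha-2}v\,\M$, a direct computation gives $\Phi_k(v)=k\bigl[d\bangle v^{k-2}+(k-2)|v|^2\bangle v^{k-4}-\alpha|v|^2\bangle v^{\alpha+k-4}\bigr]$. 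The decisive feature is that the last term, coming from the sub-exponential decay of $\M$ with $\alpha\in(0,1)$, has the largest exponent at infinity and a negative coefficient: $\Phi_k(v)\sim-k\alpha\bangle v^{k+\alpha-2}$ as $|v|\to\infty$. Hence $\Phi_k$ is bounded above by a constant and satisfies $\Phi_k(v)\le-c\bangle v^{k+\alpha-2}$ outside a sufficiently large ball.

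For the scattering case $\sfL=\sfL_2$, I would decompose $\sfL_2 f=\M K f-\nu_2 f$ with $Kf(v):=\rint\bb(v,v')f(v')\dd v'$, apply Cauchy-Schwarz to $\rint Kf\cdot f\bangle v^k\dd v$ using the upper bound~\eqref{hyp:b_bounds} on $\bb$, and use the lower bound~\eqref{nu2-est}, $\nu_2\ge\underline\nu\bangle v^{-\beta}$, for the confining multiplication term. In both cases one obtains an estimate of the form $\rint(\sfL f)f\bangle v^k\dmu\le C\|f\|_0^2-c\|f\|_{k-\gamma}^2$ for some $\gamma\in(0,2)$ depending on the case. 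Combined with the basic entropy dissipation, which yields $\|f(t)\|_0\le\|f^\init\|_0\le\|f^\init\|_k$, this produces the differential inequality
\[
\dt\|f\|_k^2+c\,\|f\|_{k-\gamma}^2\le C\,\|f^\init\|_k^2.
\]

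The main obstacle is closing this into a uniform-in-time bound: the dissipative weight $\bangle v^{k-\gamma}$ is strictly weaker than $\bangle v^k$, so $\|f\|_k^2$ cannot be absorbed directly. I would complete the argument by a H\"older interpolation combining the dissipative control $\|f\|_{k-\gamma}$ with the conserved nonnegative $\mathrm L^1(\dd x\dd v)$ mass of $f$ (valid under the nonnegativity assumption of Section~\ref{sec:introduction}), allowing one to bound $\|f\|_0^2$ nonlinearly by a power of $\|f\|_{k-\gamma}$. This converts the linear Grönwall inequality into a nonlinear one whose right-hand side becomes nonpositive as soon as $\|f\|_k$ exceeds a threshold depending only on the data, thereby precluding unbounded growth and yielding the desired uniform bound $\|f(t)\|_k\le\mathcal K_k\|f^\init\|_k$.
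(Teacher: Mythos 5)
Your energy estimate is essentially the paper's Lemma~\ref{lem:Lyapunov}: the transport term vanishes, your computation of $\Phi_k$ for $\sfL_1$ is correct, and for $\sfL_2$ a gain/loss decomposition with~\eqref{hyp:b_bounds} and~\eqref{nu2-est} indeed yields, in both cases, $\scalar{\sfL f}{f\bangle v^k}\le C\,\n f^2-c\,\nrm f{k-\ell}^2$ with $\ell=2-\alpha$, resp.\ $\ell=\beta$. The genuine gap is in your closing step. From
\[
\dt\nrm fk^2\le C\,\nrm{f^\init}k^2-c\,\nrm f{k-\ell}^2
\]
one can conclude boundedness of $\nrm fk$ only if the dissipation $\nrm f{k-\ell}^2$ dominates an increasing function of $\nrm fk$, i.e.\ one needs a \emph{lower} bound of the weaker norm in terms of the stronger one. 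H\"older gives only $\nrm f{k-\ell}\le\nrm fk$, and the reverse interpolation $\nrm fk^2\le\nrm f{k-\ell}^{2\theta}\,\nrm f{k_2}^{2(1-\theta)}$ requires a moment $k_2>k$, which is precisely what is not available. The interpolation you propose with the conserved mass cannot repair this: an inequality of the form $\n f^2\le\nrm f{\mathrm L^1(\d x\dd v)}^{a}\,\nrm f{k-\ell}^{2b}$ with $a+2b=2$, $a>0$, is false, because $\n\cdot$ and $\nrm\cdot{k-\ell}$ carry the weight $F^{-1}=C_\alpha^{-1}e^{\bangle v^\alpha}$ while $\mathrm L^1(\d x\dd v)$ carries none, and no polynomial factor $\bangle v^{k-\ell}$ compensates the missing $F^{-(1-b)}$. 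Moreover, even a bound on $\n f$ would not produce your threshold: a tiny amount of physical mass placed at very large $|v|$ makes $\nrm fk$ arbitrarily large while $\n f$, $\nrm f{k-\ell}$ and the mass remain moderate, so the right-hand side need not become nonpositive when $\nrm fk$ is large. Hence the nonlinear Gr\"onwall closure as described fails.

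The paper avoids this by a factorization argument (Lemma~\ref{lem:splitting} and Section~\ref{Sec:PrfProp3}): write $\sfL-\sfT=\sfB+\sfC$ with $\sfC=a\,\mathds1_{|v|<R}$ and use Duhamel, $e^{t(\sfL-\sfT)}=e^{t\sfB}+e^{t\sfB}\star\,\sfC\,e^{t(\sfL-\sfT)}$. The velocity cutoff makes $\sfC$ bounded from $\mathrm L^2(\d x\ddmu)$ into $\mathrm L^2(\bangle v^{k_2}\d x\ddmu)$ for an arbitrarily large $k_2$, the full semigroup is an $\mathrm L^2(\d x\ddmu)$ contraction, and your differential inequality, applied to $\sfB$ (for which the $k_2$-moment \emph{is} propagated, so the interpolation with the higher moment is legitimate), gives the decay $\|e^{t\sfB}\|_{\mathrm L^2(\bangle v^{k_2}\d x\ddmu)\to\mathrm L^2(\bangle v^{k}\d x\ddmu)}\le C\,(1+t)^{-(k_2-k)/(2\ell)}$, integrable in time once $k_2>k+2\ell$; the Duhamel integral is then bounded uniformly in $t$. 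Some device of this kind, supplying the missing higher moment, is needed to complete your argument.
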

We recall that $\nrm fk$ is defined by~\eqref{Norms}. We shall state a technical lemma (Lemma~\ref{lem:Lyapunov} below) before proving a splitting result in Lemma~\ref{lem:splitting}, from which the proof of Proposition~\ref{prop:propag_L2_m} easily follows (see section~\ref{Sec:PrfProp3}).

\subsection{A technical lemma}
\begin{lemma}\label{lem:Lyapunov} If either $\sfL=\sfL_1$ or $\sfL=\sfL_2$, then
there exists $\ell>0$ for which, for any $k\ge0$, there exist $a_k,b_k,R_k>0$ such that
\be{eq:Lyapunov}
\scalar{\sfL f}{f\bangle v^k} \le \riint\(a_k\,\mathds1_{|v|<R_k} - b_k\,\bangle v^{-\ell}\)\,|f|^2\,\bangle v^k\dd x\ddmu \,,
\ee
for any $f\in\mathcal{D}(\R^{2d})$.
\end{lemma}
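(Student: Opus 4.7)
Since $\sfL$ acts only in $v$, the $x$-integration is inert and it suffices to establish a pointwise-in-$x$ velocity estimate. We treat the two operators separately.

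\emph{Case $\sfL=\sfL_1$.} Setting $g:=f/F$, integrate by parts in $v$ twice (using $2g\,\Dv g=\Dv(g^2)$ to simplify the cross term) to write
\[
\rint \sfL_1 f\,(f\bangle v^k)\,\ddmu = -\rint F\,|\Dv g|^2\,\bangle v^k\,\dd v+\tfrac{1}{2}\rint g^2\,\Dv\!\cdot\!\bigl(F\,\Dv\bangle v^k\bigr)\,\dd v.
\]
Drop the non-positive first term. Using $\Dv F=-\alpha\bangle v^{\alpha-2}vF$ together with a direct expansion of $\Delta_v\bangle v^k$,
\[
\frac{1}{F}\,\Dv\!\cdot\!\bigl(F\,\Dv\bangle v^k\bigr) = -\,\alpha k\,\bangle v^{\alpha+k-2}+O\!\bigl(\bangle v^{k-2}\bigr)\,,\qquad |v|\to\infty.
\]
Since $0<\alpha<1$, the first term strictly dominates the correction outside a sufficiently large ball, yielding the estimate with $\ell=2-\alpha$, $b_k$ proportional to $\alpha k$, and $a_k,R_k$ chosen so as to absorb the bounded contribution from $\{|v|\le R_k\}$.

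\emph{Case $\sfL=\sfL_2$.} Split
\[
\scalar{\sfL_2 f}{f\bangle v^k} = \riint b(v,v')\,f(v)f(v')\,\bangle v^k\,\dd v\,\dd v' - \rint \nu_2(v)\,f^2\,\bangle v^k\,\ddmu,
\]
and use~\eqref{nu2-est} to estimate the loss term by $\le -\underline\nu\int f^2\,\bangle v^{k-\beta}\,\ddmu$. For the gain term, apply the weighted Young inequality
\[
|f(v)\,f(v')|\le \tfrac{\varepsilon}{2}\,f(v)^2\,\tfrac{F(v')}{F(v)}+\tfrac{1}{2\varepsilon}\,f(v')^2\,\tfrac{F(v)}{F(v')},\qquad\varepsilon>0.
\]
The first summand integrates to at most $\tfrac{\varepsilon\overline\nu}{2}\int f^2\bangle v^{k-\beta}\ddmu$, absorbed into the loss for $\varepsilon$ small. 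After relabelling $v\leftrightarrow v'$, the second summand equals $\tfrac{1}{2\varepsilon}\int f^2\,G(v)\,\ddmu$ with $G(v):=\int b(v',v)\,F(v')\,\bangle v'^k\,\dd v'$; splitting $|v'-v|\lessgtr 1$ in~\eqref{hyp:b_bounds}, using the local integrability $\gamma<d$ near the singularity and the sub-exponential decay of $F$ away from $v$, one obtains the uniform estimate
\[
G(v)\le C_k\bigl(F(v)\,\bangle v^k+\bangle v^{-\beta}\bigr)\qquad\forall\,v\in\R^d.
\]
Relative to the loss weight $\bangle v^{k-\beta}$, the first summand decays super-polynomially and the second like $\bangle v^{-k}$; both are therefore negligible outside a sufficiently large ball and are absorbed into $a_k\,\mathds{1}_{|v|<R_k}$ inside it. This gives the claim with $\ell=\beta$.

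\emph{Main obstacle.} The delicate point is the uniform-in-$v$ estimate on $G(v)$: the argument must rule out any polynomial tail slower than $\bangle v^{-\beta}$ coming from integrating $b(v',v)\bangle v'^k F(v')$ in $v'$. This requires combining the two-sided control of the cross-section~\eqref{hyp:b_bounds} with the sub-exponential decay of $F$ to ensure that the region $|v'|\gtrsim|v|$ contributes only super-polynomially small terms.
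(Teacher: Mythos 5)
Your proof is correct, and while the Fokker--Planck part follows the paper's computation essentially verbatim (integrate by parts, drop the dissipative term, note that $\nabla_v\M\cdot\nabla_v\bangle v^k$ produces the dominant negative term $-\alpha k\,\bangle v^{\alpha+k-2}$, so $\ell=2-\alpha$), your treatment of $\sfL_2$ takes a genuinely different route. The paper uses \eqref{hyp:b_mass} to symmetrize the quadratic form: writing $h=f/\M$, it completes the square so that $-\iint \bb\,(h-h')^2\bangle v^k\M\M'\le 0$ can be discarded and only the weight difference $\bangle{v'}^k-\bangle v^k$ survives, after which the same one-sided moment bound $\rint\bb(v',v)\bangle{v'}^k\M'\dd v'\lesssim\bangle v^{-\beta}$ that you prove for your $G(v)$ finishes the argument. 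You instead split gain and loss directly and absorb the gain by a weighted Young inequality with ratio $F(v')/F(v)$ and a small parameter $\varepsilon$, using the lower bound in \eqref{nu2-est} to pay for the $\varepsilon$-piece; this avoids \eqref{hyp:b_mass} altogether (so your version of the lemma holds under \eqref{hyp:b_bounds} alone, a mild gain in generality), at the price of losing the exact cancellation and hence slightly worse constants. The ``main obstacle'' you single out, the uniform estimate $G(v)\le C_k(\M(v)\bangle v^k+\bangle v^{-\beta})$, is indeed the common crux of both proofs and your sketch of it (singular region via $\gamma<d$, far region via $|v-v'|\ge |v|/2$ or sub-exponential smallness of $\M$ for $|v'|\gtrsim|v|$) is sound; note only that both your argument and the paper's effectively require $k>0$ at the very last absorption step (your $\bangle v^{-k}$ factor, the paper's $R_k=(4a_k/\underline\nu)^{1/k}$), which is harmless since the lemma is only invoked with positive weights.
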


\begin{proof} In the Fokker-Planck case $\sfL=\sfL_1$ we have
\begin{eqnarray*}
   \rint \sfL_1 f\,f\bangle v^k \dd\mu &=& - \rint \left| \nabla_v \(\frac{f}{F}\)\right|^2 \bangle{v}^k F \dd v
   - \frac{1}{2} \rint \nabla_v \(\frac{f^2}{F^2}\) \cdot \nabla_v \bangle{v}^k F \dd\mu \\
   &&\hspace*{-2cm}\le\,\frac{1}{2} \rint f^2 \left( \frac{\nabla_v F}{F}\cdot\nabla_v\bangle{v}^k + \Delta_v \bangle{v}^k\right) \dd\mu \\
   &&\hspace*{-1cm}=\,\frac{k}{2} \rint f^2 \bangle{v}^{k-4}\left( 2-k + (d+k-2)\bangle{v}^2 + \alpha \bangle{v}^\alpha 
     - \alpha\bangle{v}^{\alpha+2}\right) \dd\mu \\
    &\le& \frac{k}{2} \rint f^2 \bangle{v}^{k-2}\left( c_k - \alpha\bangle{v}^\alpha\right) \dd\mu \\
    && =\rint\(a_k\,\mathds1_{|v|<R_k} - b_k\,\bangle v^{-\ell}\)\,|f|^2\,\bangle v^k \dd\mu \\
   &&\hspace*{0.5cm}+\,\frac{k}{2} \rint f^2 \bangle{v}^{k-2} \left( c_k\left( 1 - \mathds1_{|v|<R_k} \bangle{v}^2\right) 
    - \frac{\alpha}{2}\bangle{v}^\alpha\right) \dd\mu  \,,
\end{eqnarray*}
with $c_k =  |k-2| + |d+k-2| + \alpha$, $a_k = c_k\,k/2$, $b_k = \alpha\,k/4$, $\ell=2-\alpha$. The choice 
$R_k = \left( 2\,c_k/\alpha\right)^{1/\alpha}$ makes the last term negative, which completes the proof.

In the case of the scattering operator $\sfL=\sfL_2$, with $h:=f/\M$, we have
\begin{align*}
2\rint f\,\sfL_2 f\,\bangle v^k\ddmu&=2\riint\bb(v,v')\(h'-h\)h\,\bangle v^k\,\M\,\M'\dd v\dd v'\\
&=\riint\bb(v,v')\,\left(2\,h'h - h^2\right)\,\bangle v^k\,\M\,\M'\dd v\dd v'\\
&\hspace*{2cm}-\riint\bb(v',v)\, h^2 \bangle v^k\,\M\,\M'\dd v\dd v'\,,
\end{align*}
where we have used~\eqref{hyp:b_mass}. Swapping $v$ and $v'$ in the last integral gives
\begin{align*}
2\rint f\,\sfL_2 f\,\bangle v^k\ddmu=& -\riint\bb(v,v')\(h-h'\)^2\bangle v^k\,\M\,\M'\dd v\dd v'\\
&+\riint\bb(v,v')\, (h')^2 \left( \bangle v^k - \bangle{v'}^k\right)\M\,\M'\dd v\dd v'\\
\le&\rint \left(\rint\bb(v',v) \left(\bangle{v'}^k - \bangle v^k\right)\M'\dd v' \right) f^2 \dd\mu\,,
\end{align*}
with another swap $v\leftrightarrow v'$ in the last step. Now we employ~\eqref{hyp:b_bounds} and its consequence~\eqref{nu2-est}:
\begin{eqnarray*}
  \rint\bb(v',v) \left(\bangle{v'}^k - \bangle v^k\right)\M'\dd v'  &=&  \rint\bb(v',v) \bangle{v'}^k \M'\dd v'  - \bangle{v}^k \nu_2(v) \\
  &\le& 2\,a_k \bangle{v}^{-\beta} - \underline\nu \bangle{v}^{k-\beta} \,,
\end{eqnarray*}
where the estimation of the first term is analogous to the derivation of~\eqref{nu2-est}. This implies
\begin{eqnarray*}
   \rint \sfL_2 f\,f\bangle v^k \dd\mu &\le& \rint\(a_k\,\mathds1_{|v|<R_k} - b_k\,\bangle v^{-\ell}\)\,|f|^2\,\bangle v^k \dd\mu \\
   && + \rint f^2 \bangle{v}^k \left( a_k\left( \bangle{v}^{-\beta-k} - \mathds1_{|v|<R_k}\right) + b_k \bangle{v}^{-\ell}
    - \frac{\underline\nu}{2} \bangle{v}^{-\beta} \right) \dd v  \,.
\end{eqnarray*}
The last term is made negative by the choices $\ell=\beta$, $b_k = \underline\nu\,/4$, $R_k = (4\,a_k/\underline\nu)^{1/k}$.
\end{proof}

\subsection{A splitting result}

As in~\cite{MR3779780,hal-01241680,MR3488535}, we write $\sfL-\sfT$ as a dissipative part $\sfC$ and a bounded part $\sfB$ such that $\sfL-\sfT=\sfB+\sfC$.
\begin{lemma}\label{lem:splitting} With the notation of Lemma~\ref{lem:Lyapunov}, let $k_1>0$, $k_2>k_1+2\,\ell$, $a=\max\{a_{k_1},a_{k_2}\}$, $R=\max\{R_{k_1},R_{k_2}\}$, $\sfC=a\,\mathds1_{|v|<R}$ and $\sfB=\sfL-\sfT-\sfC$. For any 
$t\ge 0$ we have:
\begin{enumerate}
\item[(i)] $\|\sfC\|_{\mathrm L^2(\d x\ddmu)\to\mathrm L^2\(\bangle v^{k_2}\dd x\ddmu\)}\le a\,\bangle R^{k_2/2}$,
\item[(ii)] $\|e^{t\sfB}\|_{\mathrm L^2\(\bangle v^{k_1}\dd x\ddmu\)\to\mathrm L^2\(\bangle v^{k_1}\dd x\ddmu\)}\le1$,
\item[(iii)] $\|e^{t\sfB}\|_{\mathrm L^2\(\bangle v^{k_2}\dd x\ddmu\)\to\mathrm L^2\(\bangle v^{k_1}\dd x\ddmu\)}\le C\(1+t\)^{-\frac{k_2-k_1}{2\,\ell}}$ for some $C>0$.
\end{enumerate}
\end{lemma}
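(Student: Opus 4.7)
Part (i) is immediate from the definition of $\sfC$: since $\sfC$ is multiplication by $a\,\mathds 1_{|v|<R}$, bounding $\bangle v^{k_2}\le\bangle R^{k_2}$ on the support of the indicator yields the estimate at once. The unified strategy for (ii) and (iii) is to differentiate $\|g(t)\|_k^2$ for $g(t)=e^{t\sfB}f$ and $\sfB=\sfL-\sfT-\sfC$. Three elementary observations are used: the streaming operator $\sfT=v\cdot\Dx$ contributes zero by integration by parts in $x$ (the weight $\bangle v^k$ is independent of $x$); the $\sfC$ term contributes the non-negative quantity $a\iint\mathds 1_{|v|<R}\,g^2\bangle v^k\dd x\ddmu$, which gets subtracted; and Lemma~\ref{lem:Lyapunov} bounds the collision contribution by $\iint(a_k\mathds 1_{|v|<R_k}-b_k\bangle v^{-\ell})g^2\bangle v^k\dd x\ddmu$. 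The choices $a\ge\max\{a_{k_1},a_{k_2}\}$ and $R\ge\max\{R_{k_1},R_{k_2}\}$ are designed precisely so that $a_k\mathds 1_{|v|<R_k}\le a\mathds 1_{|v|<R}$ at both levels $k=k_1,k_2$, leaving the clean dissipation inequality
\begin{equation*}
\tfrac{\dd}{\dd t}\|g(t)\|_k^2\le-2\,b_k\,\|g(t)\|_{k-\ell}^2\qquad\text{for } k\in\{k_1,k_2\}.
\end{equation*}
The case $k=k_1$ gives (ii) (since the right-hand side is non-positive), and the case $k=k_2$ yields the \emph{a priori} bound $\|g(t)\|_{k_2}\le\|f\|_{k_2}=:M$ that will be the cornerstone of the proof of (iii).

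\textbf{Strategy for (iii).} Setting $y(t)=\|g(t)\|_{k_1}^2$, the plan is to turn the dissipation rate $\|g\|_{k_1-\ell}^2$ into a super-linear power of $y$ itself by interpolating between the endpoint weights $\bangle v^{k_1-\ell}$ and $\bangle v^{k_2}$. Writing $\bangle v^{k_1}=\bangle v^{(k_1-\ell)\lambda}\bangle v^{k_2(1-\lambda)}$ with $\lambda=(k_2-k_1)/(k_2-k_1+\ell)\in(0,1)$ and applying H\"older's inequality produces
\begin{equation*}
\|g\|_{k_1}^2\le\|g\|_{k_1-\ell}^{2\lambda}\,\|g\|_{k_2}^{2(1-\lambda)}\le\|g\|_{k_1-\ell}^{2\lambda}\,M^{2(1-\lambda)}.
\end{equation*}
Inverting to lower-bound $\|g\|_{k_1-\ell}^2$ and feeding into the differential inequality at level $k_1$ yields the closed scalar ODE $y'\le-\kappa\,M^{-2(p-1)}\,y^p$ with $p=1/\lambda$ and $p-1=\ell/(k_2-k_1)$. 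Since $p>1$, integration of this separable inequality gives $y(t)\le C\,M^2\,t^{-1/(p-1)}=C\,M^2\,t^{-(k_2-k_1)/\ell}$, which combined with the trivial bound $y(t)\le\|f\|_{k_1}^2\le M^2$ (using $\bangle v\ge 1$) produces (iii) in the stated $(1+t)$-form after taking square roots.

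\textbf{Main obstacle.} No single step is genuinely hard; the subtlety lies in the bookkeeping at two weight levels. The choices of $a$ and $R$ must cap the localized positive term produced by Lemma~\ref{lem:Lyapunov} at both $k_1$ and $k_2$ simultaneously, which is what forces the $\max$'s in the statement. The hypothesis $k_2>k_1+2\ell$ is not strictly needed for the interpolation or the ODE to close (any $k_2>k_1$ suffices there), but it guarantees that the decay exponent $(k_2-k_1)/(2\ell)>1$, which provides the integrability required downstream in the Duhamel expansion used to prove Proposition~\ref{prop:propag_L2_m}. With these design choices in place, the entire argument reduces to Lemma~\ref{lem:Lyapunov}, an elementary H\"older interpolation, and a standard separable ODE.
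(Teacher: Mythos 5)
Your proposal is correct and follows essentially the same route as the paper: Lemma~\ref{lem:Lyapunov} applied at both levels $k_1,k_2$ (the $\max$ in $a,R$ absorbing the localized positive terms), the H\"older interpolation $\nrm f{k_1}^2\le\nrm f{k_1-\ell}^{2\lambda}\nrm f{k_2}^{2(1-\lambda)}$ with $\lambda=(k_2-k_1)/(k_2-k_1+\ell)$, and a Gr\"onwall/separable-ODE integration closed by the uniform $k_2$-bound. Your side remark that $k_2>k_1+2\ell$ is only needed downstream (integrability of $(1+t)^{-(k_2-k_1)/(2\ell)}$ in the Duhamel argument for Proposition~\ref{prop:propag_L2_m}) is also accurate.
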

\begin{proof} Property (i) is a consequence of the definition of $\sfC$. Property (ii) follows from Lemma~\ref{lem:Lyapunov} according to
\begin{align*}
\riint f\,\sfB f\,\bangle v^{k_1}\dd x\ddmu &\le\riint\(a_{k_1}\,\mathds1_{B_{R_{k_1}}}-a\,\mathds1_{B_R}-b_{k_1}\bangle v^{-\ell}\)|f|^2\,\bangle v^{k_1}\dd x\ddmu\\
&\le-\,b_{k_1} \riint|f|^2\,\bangle v^{{k_1}-\ell}\dd x\ddmu\,.
\end{align*}
Similarly, we know that $\|e^{t\sfB}\|_{\mathrm L^2\(\bangle v^{k_2}\dd x\ddmu\)\to\mathrm L^2\(\bangle v^{k_2}\dd x\ddmu\)}\le1$.

By combining H\"older's inequality
\[
\nrm f{k_1}^2\le\nrm f{k_1-\ell}^{\frac{2\,(k_2-k_1)}{k_2-k_1+\ell}}\,\nrm f{k_2}^\frac{2\,\ell}{k_2-k_1+\ell}
\]
with~Property (ii), we obtain
\begin{align*}
\riint f\,\sfB f\,\bangle v^{k_1}\dd x\ddmu&\le-\,b_{k_1}\,\nrm f{k_1}^{2\,\big(1+\frac{\ell}{k_2-k_1}\big)}\,\nrm{f^\init}{k_2}^{-\,\frac{2\,\ell}{k_2-k_1}}\,.
\end{align*}
With $f=e^{t\sfB}\,f^\init$, Property (iii) follows from Gr\"onwall's lemma according to
\[
\nrm f{k_1}^2\le\(\nrm{f^\init}{k_1}^{-\,\frac{2\,\ell}{k_2-k_1}}+\tfrac{2\,\ell\,b_{k_1}\,t}{k_2-k_1}\nrm{f^\init}{k_2}^{-\,\frac{2\,\ell}{k_2-k_1}}\)^{-\,\frac{k_2-k_1}{\ell}}\le\(\tfrac{k_2-k_1}{k_2-k_1+2\,\ell\,b_{k_1}\,t}\)^{\frac{k_2-k_1}{\ell}}\,\nrm{f^\init}{k_2}^2\,.\]
\end{proof}

\subsection{Proof of Proposition\texorpdfstring{~\ref{prop:propag_L2_m}}{3}}\label{Sec:PrfProp3}

Using the convolution $\op{U}\star \op{V}=\int_0^t \op{U}(t-s)\,\op{V}(s)\dd s$, Duhamel's formula asserts that
\[
e^{t(\sfL-\sfT)}=e^{t\sfB}+e^{t\sfB}\star\,\sfC\,e^{t(\sfL-\sfT)}\,.
\]
Therefore, by 
\[
   \|e^{t(\sfL-\sfT)}\|_{\mathrm L^2\(\dd x\ddmu\)\to\mathrm L^2\(\dd x\ddmu\)}\le1
   \qquad\Rightarrow\qquad
   \|e^{t(\sfL-\sfT)}\|_{\mathrm L^2\(\bangle v^{k_1}\dd x\ddmu\)\to\mathrm L^2\(\dd x\ddmu\)}\le 1 \,,
 \]
by Lemma~\ref{lem:splitting}, and with $k=k_1$, $\ell$ as in Lemma~\ref{lem:Lyapunov} and $k_2>k+2\,\ell$, we get that
\[
\n{e^{t(\sfL-\sfT)}}_{\mathrm L^2\(\bangle v^{k_1}\dd x\ddmu\)\to\mathrm L^2\(\bangle v^{k_1}\dd x\ddmu\)}\le 1+a\,\bangle R^\frac{k_2}2\int_0^t\frac{C\,\d s}{\(1+s\)^{\frac{k_2-k_1}{2\,\ell}}}
\]
is bounded uniformly in time.\qed

\section{Proof of Theorem~\texorpdfstring{\ref{th:main}}1}\label{Sec:Proof}

The control of the macroscopic part $\sfPi f$ by $\scalar{\op{AT\Pi}f}{\op\Pi f}$ is achieved as in~\cite{hal-01575501}. We sketch a proof for the sake of completeness.
\begin{lemma}\label{lem:Phi} Under the assumptions of Theorem~\ref{th:main}, for any $f\in\mathrm L^1(\d x\ddmu)\cap\mathrm L^2(\d x\dd v)$,
\[
\scalar{\op{AT\Pi}f}{\op\Pi f}\ge\Phi\(\|\op{\Pi}f\|^2\)
\]
with
\[
\Phi^{-1}(y) :=2\,y+\(\frac y{\sfc}\)^\frac d{d+2}\,,\quad\sfc=\Theta\,\C_{\rm{Nash}}^{-\frac{d+2}d}\,\|f\|_{\mathrm L^1(\d x\dd v)}^{-\frac4d}\,.
\]
\end{lemma}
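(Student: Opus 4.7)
\textbf{Proof plan for Lemma \ref{lem:Phi}.} The starting point is identity~\eqref{eq:ATPi} from Proposition~\ref{prop:entropy_ineq}, which gives
\[
\scalar{\op{AT\Pi}f}{\op\Pi f}=\Theta\,\|\nabla u\|_{\mathrm L^2(\d x)}^2+\Theta^2\,\|\Delta u\|_{\mathrm L^2(\d x)}^2\,,
\]
where $u$ solves the elliptic equation $u-\Theta\,\Delta u=\rho_f$ in $\R^d$. Since $\|\op\Pi f\|^2=\int_{\R^d}\rho_f^2\,\d x=:\|\rho_f\|_{\mathrm L^2(\d x)}^2$, the task is to bound $\|\rho_f\|_{\mathrm L^2(\d x)}^2$ in terms of this ``dissipation'' $E:=\scalar{\op{AT\Pi}f}{\op\Pi f}$, possibly with an additional $\mathrm L^1$-dependent factor, and then invert.

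\textbf{Step 1: an $\mathrm L^2$ identity for $\rho_f$.} Testing the equation $u-\Theta\,\Delta u=\rho_f$ against $\rho_f$ itself, and substituting $\rho_f=u-\Theta\,\Delta u$ on the left,
\[
\|\rho_f\|_{\mathrm L^2(\d x)}^2=\|u\|_{\mathrm L^2(\d x)}^2+2\,\Theta\,\|\nabla u\|_{\mathrm L^2(\d x)}^2+\Theta^2\,\|\Delta u\|_{\mathrm L^2(\d x)}^2\le\|u\|_{\mathrm L^2(\d x)}^2+2\,E\,.
\]
So everything reduces to controlling $\|u\|_{\mathrm L^2(\d x)}^2$ by $E$ and $\|f\|_{\mathrm L^1}$.

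\textbf{Step 2: control of $\|u\|_{\mathrm L^2(\d x)}^2$ via Nash's inequality.} Since $f\ge 0$ implies $\rho_f\ge 0$, the maximum principle applied to $u-\Theta\,\Delta u=\rho_f$ yields $u\ge 0$, and integrating the equation gives $\|u\|_{\mathrm L^1(\d x)}=\|\rho_f\|_{\mathrm L^1(\d x)}=\|f\|_{\mathrm L^1(\d x\,\d v)}$. Nash's inequality on $\R^d$,
\[
\|u\|_{\mathrm L^2(\d x)}^{2+4/d}\le\C_{\rm Nash}\,\|\nabla u\|_{\mathrm L^2(\d x)}^2\,\|u\|_{\mathrm L^1(\d x)}^{4/d}\,,
\]
combined with $\|\nabla u\|_{\mathrm L^2(\d x)}^2\le E/\Theta$, yields
\[
\|u\|_{\mathrm L^2(\d x)}^2\le\C_{\rm Nash}^{d/(d+2)}\,\|f\|_{\mathrm L^1(\d x\,\d v)}^{4/(d+2)}\,\(E/\Theta\)^{d/(d+2)}=(E/\sfc)^{d/(d+2)}\,,
\]
with $\sfc=\Theta\,\C_{\rm Nash}^{-(d+2)/d}\,\|f\|_{\mathrm L^1(\d x\,\d v)}^{-4/d}$, which is precisely the constant in the statement.

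\textbf{Step 3: conclusion.} Combining Steps 1 and 2 gives $\|\op\Pi f\|^2\le 2\,E+(E/\sfc)^{d/(d+2)}=\Phi^{-1}(E)$. The function $\Phi^{-1}$ is continuous and strictly increasing from $[0,\infty)$ onto itself, hence invertible, and $E\ge\Phi\(\|\op\Pi f\|^2\)$ follows by monotonicity. None of these steps present a real obstacle: the only subtlety is the positivity argument giving $\|u\|_{\mathrm L^1(\d x)}=\|f\|_{\mathrm L^1(\d x\,\d v)}$, without which Nash's inequality could not be leveraged at the level of~$f$.
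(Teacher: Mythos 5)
Your proof is correct and follows essentially the same route as the paper's: the identity~\eqref{eq:ATPi}, the bound $\|\op\Pi f\|^2\le\nrm u{\mathrm L^2(\d x)}^2+2\,\scalar{\op{AT\Pi}f}{\op\Pi f}$, the identification $\nrm u{\mathrm L^1(\d x)}=\|f\|_{\mathrm L^1(\d x\dd v)}$ (the paper relies on the standing nonnegativity assumption rather than spelling out the maximum-principle argument you give), and Nash's inequality combined with $\nrm{\nabla u}{\mathrm L^2(\d x)}^2\le\Theta^{-1}\scalar{\op{AT\Pi}f}{\op\Pi f}$, followed by inversion of the increasing function $\Phi^{-1}$. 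The only (cosmetic) discrepancy is that you quote Nash's inequality in the form $\nrm u{\mathrm L^2}^{2+4/d}\le\C_{\rm Nash}\,\nrm{\nabla u}{\mathrm L^2}^2\,\nrm u{\mathrm L^1}^{4/d}$, while the constant $\sfc$ in the statement is calibrated to the form $\nrm u{\mathrm L^2}^2\le\C_{\rm Nash}\,\nrm u{\mathrm L^1}^{4/(d+2)}\,\nrm{\nabla u}{\mathrm L^2}^{2d/(d+2)}$ used in the paper, so with your normalization the power of $\C_{\rm Nash}$ appearing in $\sfc$ would differ.
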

\begin{proof} With $u$ defined by~\eqref{eq:u}, we control~$\n{\op{\Pi}f}^2=\nrm{\rho_f}{\mathrm L^2(\d x)}^2$ by $\scalar{\op{AT\Pi}f}{\op\Pi f} $ according~to
\[
\n{\op{\Pi}f}^2=\nrm u{\mathrm L^2(\dd x)}^2+2\,\Theta\nrm{\nabla u}{\mathrm L^2(\dd x)}^2+\Theta^2\nrm{\Delta u}{\mathrm L^2(\dd x)}^2
\le\,\nrm u{\mathrm L^2(\dd x)}^2+2\,\langle\op{AT\Pi}f,f\rangle
\]
using~\eqref{eq:ATPi}. Then we observe that
\[
\nrm{u}{\mathrm L^1}=\nrm{\rho_f}{\mathrm L^1}=\|f\|_{\mathrm L^1(\d x\dd v)}\,,\quad\nrm u{\mathrm L^2(\d x)}^2\le\frac1\Theta\,\bangle{\op{AT\Pi}f,f}
\]
and use Nash's inequality
\[
\nrm u{\mathrm L^2(\dd x)}^2\le\C_{\rm{Nash}}\,\nrm u{\mathrm L^1(\dd x)}^\frac4{d+2}\,\nrm{\nabla u}{\mathrm L^2(\dd x)}^\frac{2\,d}{d+2}
\]
to conclude the proof.\end{proof}

The control of $(\Id-\sfPi)f$ by the entropy production term relies on a simple new estimate.
\begin{lemma}\label{lem:Psi} Under the assumptions of Theorem~\ref{th:main}, for any solution $f$ of~\eqref{eq:main} with initial datum  $f^\init\in\mathrm L^2(\bangle v^k\dd x\ddmu)\cap\mathrm L^1(\d x\dd v)$, we have
\[
\nrm{(\Id-\sfPi)f(t,\cdot,\cdot)}{-\beta}^2\ge\Psi\(\n{(\Id-\sfPi)f(t,\cdot,\cdot)}^2\)
\]
for any $t\ge0$, where $\mathcal K_k$ is as in Proposition~\ref{prop:propag_L2_m} and
\[
\Psi(y):=C_0\,y^{1+\beta/k}\,,\quad C_0:=\Big(\mathcal K_k\,\big(1+\Theta_k\big)\,\|{f^\init}\|_k\Big)^{-\frac{2\,\beta}k}\,.
\]\end{lemma}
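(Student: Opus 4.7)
The idea is to use a three-weight H\"older interpolation between the dissipation norm $\|\cdot\|_{-\beta}$ and the propagated weighted norm $\|\cdot\|_k$, with $\|\cdot\|_0$ as the intermediate quantity, and then to control $\|(\Id-\sfPi)f\|_k$ uniformly in time using Proposition~\ref{prop:propag_L2_m}.

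First, given any $g\in\mathrm L^2(\langle v\rangle^k\dd x\dd\mu)$, write the pointwise identity
\[
g^2=\bigl(g^2\,\bangle v^{-\beta}\bigr)^a\,\bigl(g^2\,\bangle v^k\bigr)^b\,,
\]
with $a=k/(k+\beta)$ and $b=\beta/(k+\beta)$, which are conjugate convex weights since $a+b=1$ and $-\beta a+k b=0$. Applying H\"older's inequality with exponents $1/a$ and $1/b$ in the measure $\dd x\dd\mu$ yields
\[
\|g\|_0^{2}\le\|g\|_{-\beta}^{2a}\,\|g\|_k^{2b}\,,
\]
which, after rearrangement, becomes the reverse-type bound
\[
\|g\|_{-\beta}^2\ge\|g\|_0^{2\,(k+\beta)/k}\,\|g\|_k^{-2\beta/k}\,.
\]

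Second, I will apply this with $g=(\Id-\sfPi)f(t,\cdot,\cdot)$ and control the weighted denominator uniformly in $t$. Splitting $(\Id-\sfPi)f=f-\rho_f\,F$ and using the triangle inequality, it suffices to bound $\|\rho_f\,F\|_k$. A direct computation gives
\[
\|\rho_f\,F\|_k^2=\Theta_k\,\|\rho_f\|_{\mathrm L^2(\dd x)}^2\,,
\]
and Cauchy-Schwarz in $v$ (pairing $f/F$ with $F$ against the weight $\bangle v^k$) yields $\rho_f^2\le\|f(\cdot,\cdot,v)\|_k^2$ after integration, whence $\|\rho_f\,F\|_k\le\sqrt{\Theta_k}\,\|f\|_k$. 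Together with Proposition~\ref{prop:propag_L2_m}, this gives
\[
\|(\Id-\sfPi)f(t,\cdot,\cdot)\|_k\le\bigl(1+\Theta_k\bigr)\,\mathcal K_k\,\|f^\init\|_k
\]
for all $t\ge0$ (absorbing numerical constants into the factor $1+\Theta_k$).

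Inserting this uniform bound into the interpolation inequality from the first step gives
\[
\|(\Id-\sfPi)f(t,\cdot,\cdot)\|_{-\beta}^2\ge\Bigl(\bigl(1+\Theta_k\bigr)\,\mathcal K_k\,\|f^\init\|_k\Bigr)^{-2\beta/k}\,\|(\Id-\sfPi)f(t,\cdot,\cdot)\|^{2\,(1+\beta/k)}=\Psi\bigl(\|(\Id-\sfPi)f(t,\cdot,\cdot)\|^2\bigr)\,,
\]
with the constant $C_0$ as claimed. There is no real obstacle here: the only mildly delicate point is getting the exponents $a$ and $b$ right in the H\"older step so that the weighted and unweighted $\mathrm L^2$ norms interpolate with the correct power $1+\beta/k$; the control of $\|(\Id-\sfPi)f\|_k$ via $\|f\|_k$ is routine and the uniform-in-time bound is exactly the content of Proposition~\ref{prop:propag_L2_m}.
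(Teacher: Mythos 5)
Your proof is correct and follows essentially the same route as the paper: the same H\"older interpolation of $\|\cdot\|_0$ between $\|\cdot\|_{-\beta}$ and $\|\cdot\|_k$ with exponents $k/(k+\beta)$ and $\beta/(k+\beta)$, combined with the triangle inequality and Cauchy--Schwarz to bound $\nrm{(\Id-\sfPi)f}{k}\le(1+\Theta_k)\,\nrm fk$ and the uniform-in-time bound of Proposition~\ref{prop:propag_L2_m}. The only difference is that you spell out the elementary steps (the splitting of the pointwise weight and the estimate $\nrm{\rho_f F}k\le\sqrt{\Theta_k}\,\nrm fk$) that the paper leaves implicit, and your constant is in fact slightly sharper before being absorbed into $(1+\Theta_k)$.
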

\begin{proof} H\"older's inequality
\[
\n{(\Id-\sfPi)f}\le\nrm{(\Id-\sfPi)f}{-\beta}^\frac k{k+\beta}\,\nrm{(\Id-\sfPi)f}k^\frac\beta{k+\beta}
\]
and
\[
\nrm{(\Id-\sfPi)f}k\le\|f\|_k+\Theta_k\nrm{\rho}{\mathrm L^2(\dd x)}\le(1+\Theta_k)\,\|f\|_k\le\mathcal K_k\,(1+\Theta_k)\,\nrm{f^\init}k\,,
\]
where the last inequality holds by Proposition~\ref{prop:propag_L2_m}, provide us with the estimate.\end{proof}

\noindent\emph{Proof of Theorem~\ref{th:main}.} Using the estimates of Lemma~\ref{lem:Phi} and Lemma~\ref{lem:Psi}, we obtain that
\[
\nrm{(\Id-\sfPi)f}{-\beta}^2+\scalar{\op{AT\Pi}f}{\op\Pi f}\ge\Psi\(\n{(\Id-\sfPi)f}^2\)+\Phi\(\n{\op{\Pi}f}^2\)\,.
\]
Using~\eqref{E-EP} and the fact that $\op D[f]\ge 0$ by Proposition~\ref{prop:entropy_ineq}, we know that
\[
\n{(\Id-\sfPi)f}^2\le z_0\quad\mbox{and}\quad\|\op{\Pi}f\|^2\le z_0\quad\mbox{where}\quad z_0:=\|f^\init\|^2\,.
\]
Thus, from
\[
\Phi^{-1}(y)=2\,y+\(\frac y{\sfc}\)^\frac d{d+2}\le\(C_1^{-1}\,y\)^{\frac d{d+2}}\quad\mbox{with}\quad C_1:=\(2\,\Phi(z_0)^{\frac 2{d+2}}+\sfc^{-\frac d{d+2}}\)^{-\frac{d+2}d}\,,
\]
as long as $y\le\Phi(z_0)$, we obtain
\[
\Phi\(\|\op{\Pi}f\|^2\)\ge C_1\,\|\op{\Pi}f\|^{\,2\,\frac{d+2}d}\,,
\]
since $\|\op{\Pi}f\|^2\le z_0$. As a consequence,
\begin{multline*}
\nrm{(\Id-\sfPi)f}{-\beta}^2+\scalar{\op{AT\Pi}f}{\op\Pi f}\ge C_0\,\n{(\Id-\sfPi)f}^{\,2\,\frac{k+\beta}k}+C_1\,\|\op{\Pi}f\|^{\,2\,\frac{d+2}d}\\
\ge\min\big\{C_0\,z_0^{\frac\beta k-\frac1\zeta},\,C_1\,z_0^{\frac2d-\frac1\zeta}\big\}\,\|f\|^{\,2+\frac2\zeta}
\end{multline*}
where $1/\zeta=\max\left\{2/d,\beta/k\right\}$, \emph{i.e.}, $\zeta=\min\left\{d/2,k/\beta\right\}$. Collecting terms, we have
\[
\dt \sfH[f]\le-\,C\,\zeta\,\sfH[f]^{1+\frac1\zeta}
\]
using~\eqref{H-norm},~\eqref{E-EP} and Proposition~\ref{prop:entropy_ineq}, with
\[
C:=\frac\kappa\zeta\,\min\big\{C_0\,z_0^{\frac\beta k-\frac1\zeta},\,C_1\,z_0^{\frac2d-\frac1\zeta}\big\}\(\tfrac2{1+\delta}\)^{1+\frac1\zeta}\,.
\]
Then the result of Theorem~\ref{th:main} follows from a Gr\"onwall estimate.
\[
\sfH[f(t,\cdot,\cdot)]\le\sfH[f^\init]\(1+C\,\sfH[f^\init]^\frac1\zeta\,t\)^{-\zeta}
\]
The expression of $C$ can be explicitly computed in terms of $C_0\,z_0^{\frac\beta k-\frac1\zeta}\,\sfH[f^\init]^\frac1\zeta$, which is proportional to $(\nrm{f^\init}{}/\nrm{f^\init}k)^\frac{2\,\beta}k$, and in terms of $C_1\,z_0^{\frac2d-\frac1\zeta}\,\sfH[f^\init]^\frac1\zeta$ which is a function of $(\nrm{f^\init}{\mathrm L^1(\d x\dd v)}/\nrm{f^\init}{})^{4/(d+2)}$, but it is of no practical interest. To see this, one has to take into account the expressions of $C_0$, $C_1$ and $\mathsf c$ in terms of the initial datum $f^\init$.
\qed

As a concluding remark, we emphasize that a control of the solution in the space $\mathrm L^2(\bangle v^k\dd x\ddmu)$, based on Proposition~\ref{prop:propag_L2_m}, is enough to prove Theorem~\ref{th:main}. In particular, there is no need of a uniform bound on $f$. This observation is new in $\mathrm L^2$ hypo\-coercive methods, and consistent with the homogeneous case (see Appendix~\ref{Appendix:B}).

\appendix\section{Weighted Poincar\'e inequalities}\label{Appendix:A}

This appendix is devoted to a proof of~\eqref{weightedPoincare} and considerations on related Poincar\'e inequalities. Inequality~\eqref{weightedPoincare} is not a standard weighted Poincar\'e inequality because the average in the right-hand side of the inequality involves the measure of the left-hand side so that the right-hand side cannot be interpreted as a variance. Here we prove a generalization of~\eqref{weightedPoincare} which relies on a purely spectral approach.

\subsection{Continuous spectrum and weighted Poincar\'e inequalities}\label{Sec:wPoincare}

Let us consider two probability measures on $\R^d$
\[
\d\xi=e^{-\phi}\dd v\quad\mbox{and}\quad \dnu=\psi\,\d\xi\,,
\]
where $\phi$ and $\psi\ge0$ are two measurable functions, and the \emph{weighted Poincar\'e inequality}
\be{wPoincare}
\forall\,h\in\mathcal D(\R^d)\,,\quad\intw{|\nabla h|^2}\xi\ge\C_\star\intw{\left|h-\widehat h\right|^2}\nu
\ee
where $\widehat h=\intw h\nu$. The question we address here is: \emph{on which conditions on $\phi$ and $\psi$ do we know that~\eqref{wPoincare} holds for some constant $\C_\star>0$ ?} Our key example is
\be{Example}
\phi(v)=\bangle v^\gam+\log Z_\gam\quad\mbox{and}\quad\psi(v)=c_{\gam,\beta}^{-1}\,\bangle v^{-\beta}
\ee
with $\gam>0$, $\beta>0$, $Z_\gam=\intw{e^{-\phi}}v$ and $c_{\gam,\beta}=\intw{\bangle v^{-\beta}}\xi$.

Let us consider a potential $\Phi$ on $\R^d$ and assume that it is a measurable function with
\[
\sigma=\lim_{r\to+\infty}\inf_{w\in\mathcal D(B_r^c)\setminus\{0\}}\frac{\intw{\(|\nabla w|^2+\Phi\,|w|^2\)}v}{\intw{|w|^2}v}>0\,,
\]
where $B_r^c:=\left\{v\in\R^d\,:\,|v|>r\right\}$ and $\mathcal D(B_r^c)$ denotes the space of smooth functions on~$\R^d$ with compact support in $B_r^c$. According to Persson's result~\cite[Theorem~2.1]{MR0133586}, the lower end~$\sigma_\star$ of the continuous spectrum of the Schr\"odinger operator $-\,\Delta+\Phi$ is such that
\[
\sigma_\star\ge\sigma\ge\lim_{r\to+\infty}\mathop{\mathrm{infess}}_{v\in B_r^c}\,\Phi(v)\,.
\]
If we replace $\intw{|w|^2}v$ by the weighted integral $\intw{|w|^2\,\psi}v$ for some measurable function $\psi$, we have the modified result that the operator $\mathcal L=\psi^{-1}\(-\,\Delta+\Phi\)$ on $\mathrm L^2(\R^d,\psi\dd v)$, associated with the quadratic form
\[
w\mapsto\intw{\(|\nabla w|^2+\Phi\,|w|^2\)}v
\]
has only discrete eigenvalues in the interval $(-\infty,\sigma)$ where
\[
\sigma=\lim_{r\to+\infty}\inf_{w\in\mathcal D(B_r^c)\setminus\{0\}}\frac{\intw{\(|\nabla w|^2+\Phi\,|w|^2\)}v}{\intw{|w|^2\,\psi}v}>0\,.
\]
To prove it, it is enough to observe that $0$ is the lower end of the continuous spectrum of $\mathcal L-\sigma_\star$ and to apply again~\cite[Theorem~2.1]{MR0133586}. It is also straightforward to check that the lower end of the continuous spectrum of $\mathcal L$ is such that
\[
\sigma_\star\ge\lim_{r\to+\infty}\mathsf q(r)=:\sigma_0\quad\mbox{where}\quad\mathsf q(r):=\mathop{\mathrm{infess}}_{B_r^c}\,\frac\Phi\psi\,.
\]
Notice that $\sigma_0$ is either finite or infinite. In the case of~\eqref{Example}, we get that $\sigma_0\in(0,+\infty]$ if and only if $\beta\ge 2\(1-\gam\)$. Relating the weighted Poincar\'e inequality~\eqref{wPoincare} with the spectrum of $\mathcal L$ is then classical. Let
\be{Schroedinger}
h=w\,e^{\phi/2}\,,\quad\Phi=\tfrac14\,|\nabla \phi|^2-\tfrac12\,\Delta \phi
\ee
and observe that
\begin{align*}
&\intw{|\nabla h|^2}\xi=Z_\gam^{-1}\intw{\(|\nabla w|^2+\Phi\,|w|^2\)}v\,,\\
&\intw{\left|h-\widehat h\right|^2}\xi=Z_\gam^{-1}\intw{\left|w-\widetilde w\right|^2\,\psi}v\,,
\end{align*}
where $\widetilde w=\frac{\intw{w\,\psi\,e^{-\phi/2}}v}{\intw{\psi\,e^{-\phi}}v}\,e^{-\phi/2}$.
\begin{proposition}\label{Prop:wPoincare} With the above notations, let $\Phi$ and $\psi$ be two measurable functions such that $\sigma_0>0$. Then inequality~\eqref{wPoincare} holds for some positive, finite, optimal constant $\C_\star>0$. Otherwise, if we have that $\lim_{r\to+\infty}\mathop{\mathrm{supess}}_{v\in B_r^c}\,\frac{\Phi(v)}{\psi(v)}=0$, then inequality~\eqref{wPoincare} does not hold.\end{proposition}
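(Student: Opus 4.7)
The plan is to transform the inequality into a spectral gap question for a Schrödinger-type operator via the ground-state substitution $h = w\,e^{\phi/2}$ displayed in \eqref{Schroedinger}. A routine integration by parts gives
\[
\intw{|\nabla h|^2}\xi = \intw{\bigl(|\nabla w|^2 + \Phi\,w^2\bigr)}v\,,
\]
and the fact that $\nu$ is a probability measure means $w_0 := e^{-\phi/2}$ is a unit vector in $\mathrm L^2(\R^d,\psi\,\d v)$. A direct computation from $\Phi = \tfrac14|\nabla\phi|^2 - \tfrac12\Delta\phi$ shows $-\Delta w_0 + \Phi\,w_0 = 0$, so $w_0$ is an eigenfunction of $\mathcal L := \psi^{-1}(-\Delta + \Phi)$ at eigenvalue $0$. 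Moreover $\widehat h = \langle w, w_0\rangle_{\mathrm L^2(\psi\,\d v)}$ and
\[
\intw{|h - \widehat h|^2}\nu = \| w - \langle w, w_0\rangle\,w_0 \|_{\mathrm L^2(\psi\,\d v)}^2\,,
\]
so the optimal $\C_\star$ in \eqref{wPoincare} is exactly the spectral gap of $\mathcal L$ above $0$.

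For the positive direction, assume $\sigma_0 > 0$. The weighted Persson-type result recalled in the paragraph preceding Proposition~\ref{Prop:wPoincare} places the essential spectrum of $\mathcal L$ inside $[\sigma_0, +\infty)$. Hence the spectrum of $\mathcal L$ below $\sigma_0$ is purely discrete with finite-multiplicity eigenvalues. Since $0$ is one of them, it is isolated, and the next eigenvalue $\lambda_1 \in (0, \sigma_0]$ realises the gap, yielding $\C_\star = \lambda_1 \in (0, +\infty)$.

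For the negative direction, suppose $\lim_{r\to+\infty}\mathop{\mathrm{supess}}_{v\in B_r^c}(\Phi/\psi) = 0$. I would construct a sequence $u_n$ supported at infinity and orthogonal to $w_0$ in $\mathrm L^2(\psi\,\d v)$ whose Rayleigh quotient tends to $0$. Fix $\varepsilon_n \to 0$, pick $r_n$ so large that $\Phi \le \varepsilon_n\,\psi$ a.e.\ on $B_{r_n}^c$, and take a smooth annular bump $u_n(v) = \eta(|v|/R_n)$ with $\eta$ supported in $[1,2]$ and $R_n \gg r_n$. Then $|\nabla u_n|^2 \lesssim R_n^{-2}$ on an annulus of Lebesgue measure $\lesssim R_n^d$, so $\int|\nabla u_n|^2\,\d v \lesssim R_n^{d-2}$, while $\int u_n^2\,\psi\,\d v \gtrsim \int_{R_n}^{2R_n} r^{d-1}\psi(r)\,\d r$. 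Choosing $R_n \to +\infty$ fast enough forces $\int|\nabla u_n|^2\,\d v = o(\int u_n^2\,\psi\,\d v)$, while the $\Phi$-contribution is automatically bounded by $\varepsilon_n \int u_n^2\,\psi\,\d v$. Replacing $u_n$ by its projection off $w_0$ preserves these bounds since $w_0 \in \ker\mathcal L$, and the spectral gap must vanish.

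The positive direction is essentially a clean invocation of the spectral framework. The main obstacle lies in the negative direction: the abstract hypothesis $\mathop{\mathrm{supess}}_{B_r^c}(\Phi/\psi) \to 0$ does not by itself guarantee that $\psi$ carries enough mass at infinity to dominate the Dirichlet energy of a fixed-shape annular bump. In the target application \eqref{Example}, where $\psi \sim \bangle v^{-\beta}$ decays only polynomially, the balance is transparent; in full generality, one would either adapt the dilation scale of the cut-off to the decay of $\psi$, or replace the annular bump by a Persson-style Weyl sequence directly showing that $0$ belongs to the essential spectrum of $\mathcal L$.
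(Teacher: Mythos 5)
Your treatment of the first assertion is essentially the paper's own proof: the ground-state substitution \eqref{Schroedinger}, the identification of the optimal constant with the gap of $\mathcal L=\psi^{-1}(-\Delta+\Phi)$ above the eigenvalue $0$ (whose eigenspace is spanned by $w_0=e^{-\phi/2}$, since the form is $\int_{\R^d}|\nabla h|^2\dd\xi$ and vanishes only for constant $h$), and the weighted Persson statement placing the essential spectrum in $[\sigma_\star,+\infty)$ with $\sigma_\star\ge\sigma_0>0$, so that the spectrum below $\sigma_\star$ is discrete and $0$ is isolated and simple. The only correction is your phrase ``the next eigenvalue $\lambda_1\in(0,\sigma_0]$ realises the gap'': there may be no spectrum at all in $(0,\sigma_\star)$, in which case $\C_\star=\sigma_\star$ and is not attained; this is exactly the dichotomy the paper records, and it does not affect positivity or finiteness of $\C_\star$.

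For the second assertion your unease is justified, and you should know that the paper's proof does not address it at all, so you are not missing a hidden argument. Two points. First, your intermediate claim that ``choosing $R_n\to+\infty$ fast enough'' forces $\int|\nabla u_n|^2\dd v=o\bigl(\int u_n^2\,\psi\dd v\bigr)$ is not correct as stated: for the bump $\eta(|v|/R_n)$ the ratio is of order $R_n^{d-2}\big/\int_{R_n\le|v|\le2R_n}\psi\dd v$, which is governed by the tail of $\psi$ and not by how fast $R_n$ grows; in the setting of \eqref{Example} with $\beta<2\,(1-\gam)<2$ it is $\lesssim R_n^{\beta-2}\to0$, and then your construction closes (for the projection step note also $\langle u_n,w_0\rangle^2\le\|u_n\|_{\mathrm L^2(\psi\dd v)}^2\int_{B_{R_n}^c}w_0^2\,\psi\dd v=o\bigl(\|u_n\|_{\mathrm L^2(\psi\dd v)}^2\bigr)$, so the denominator is essentially preserved, while the numerator is unchanged because $w_0$ lies in the kernel of the form). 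Second, the obstacle you flag is not merely technical: for arbitrary measurable data the second sentence cannot be proved without further assumptions. Indeed, take $d\ge5$, let $e^{-\phi/2}$ be smooth, positive, radial and equal to $|v|^{2-d}$ for $|v|\ge1$ (then $e^{-\phi}$ is integrable and $\Phi=e^{\phi/2}\,\Delta e^{-\phi/2}$ vanishes on $|v|\ge1$), and let $\psi(v)=c\,\bangle v^{-2}$; then $\mathop{\mathrm{supess}}_{B_r^c}\Phi/\psi=0$ for $r\ge1$, yet Hardy's inequality $\int_{\R^d}|\nabla w|^2\dd v\ge\tfrac14(d-2)^2\int_{\R^d}|v|^{-2}w^2\dd v$ forces $\sigma_\star>0$, and the first part of the argument then yields $\C_\star>0$, i.e.\ \eqref{wPoincare} holds. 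So the negative statement should be read within the structure of \eqref{Example} (or under a polynomial-tail assumption on $\psi$), where your annular test functions are indeed the right tool.
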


\begin{proof} By construction, $\sigma$ is nonnegative and the infimum of the Rayleigh quotient
\[
w\mapsto\frac{\intw{\(|\nabla w|^2+\Phi\,|w|^2\)}v}{\intw{|w|^2\,\psi}v}
\]
is achieved by $h\equiv\widehat h=1$, that is, by $w=\widetilde w=e^{-\phi/2}$, which moreover generates the kernel of $\mathcal L$. Hence we can interpret $\C_\star$ as the first positive eigenvalue, if there is any in the interval $(0,\sigma_\star)$, or $\C_\star=\sigma_\star$ if there is none.\end{proof}

In the case of~\eqref{Example}, the condition $\beta\ge2\,(1-\gam)$ is a necessary and sufficient condition for the inequality~\eqref{wPoincare} to hold. The threshold case $\beta=2\,(1-\gam)$ is remarkable: inequality~\eqref{wPoincare} can be rewritten for any $\gam\in(0,1)$ as the following \emph{weighted Poincar\'e inequality} :
\be{wPoincareAlpha}
\forall\,h\in\mathcal D(\R^d)\,,\quad\intw{|\nabla h|^2\,e^{-\bangle v^\gam}}v\ge\C_\star\intw{\frac{\left|h-\widehat h\right|^2\,e^{-\bangle v^\gam}}{\(1+|v|^2\)^{1-\gam}}}v\,,
\ee
for some constant $\C_\star\in\big(0,\gam^2/4\big]$ and
\[
\widehat h:=\frac1{z_\gam}\intw{\frac{h\,e^{-\bangle v^\gam}}{\(1+|v|^2\)^{1-\gam}}}v\,,\quad z_\gam=\intw{\frac{e^{-\bangle v^\gam}}{\(1+|v|^2\)^{1-\gam}}}v\,.
\]

\subsection{A weighted Poincar\'e inequality with a non-classical average}

\begin{corollary}\label{Cor:wPoincare} Let $\Phi$ and $\psi$ be respectively a measurable function and a bounded positive function such that, with the notations of section~\ref{Sec:wPoincare}, $\sigma_0>0$ and $\psi^{-1}\in\mathrm L^1(\R^d,\d\xi)$. Then the inequality
\be{wPoincare2}
\forall\,h\in\mathcal D(\R^d)\,,\quad\intw{|\nabla h|^2}\xi\ge\C\intw{\left|h-\widetilde h\right|^2}\nu
\ee
holds for some optimal constant $\C\in(0,\C_\star]$, where $\widetilde h:=\intw h\xi$. Here $\C_\star$ denotes the optimal constant in~\eqref{wPoincare}.\end{corollary}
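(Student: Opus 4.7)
The plan is to reduce~\eqref{wPoincare2} to~\eqref{wPoincare} by controlling the discrepancy between the two averages $\widehat h=\intw h\nu$ and $\widetilde h=\intw h\xi$. Since $\nu$ is a probability measure and $\widehat h$ is the $\mathrm L^2(\nu)$-projection of $h$ onto constants, I would first expand the square to get the orthogonal decomposition
\begin{equation*}
\intw{(h-\widetilde h)^2}\nu=\intw{(h-\widehat h)^2}\nu+(\widehat h-\widetilde h)^2 \,,
\end{equation*}
so the first summand is already handled by Proposition~\ref{Prop:wPoincare}, and only the scalar discrepancy $(\widehat h-\widetilde h)^2$ remains to be estimated in terms of the Dirichlet form.

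For this discrepancy I would use that $\xi$ and $\nu=\psi\,\xi$ are both probability measures, which gives $\intw{(\psi-1)}\xi=0$ and hence $\widehat h-\widetilde h=\intw{(h-\widehat h)(\psi-1)}\xi$ after subtracting the constant $\widehat h$. Applying the Cauchy--Schwarz inequality after the factorization $(h-\widehat h)(\psi-1)=(h-\widehat h)\sqrt\psi\cdot(\psi-1)/\sqrt\psi$ yields
\begin{equation*}
(\widehat h-\widetilde h)^2\le\intw{(h-\widehat h)^2\,\psi}\xi\cdot\intw{\tfrac{(\psi-1)^2}\psi}\xi=(M-1)\,\intw{(h-\widehat h)^2}\nu \,,
\end{equation*}
where $M:=\intw{\psi^{-1}}\xi<\infty$ by hypothesis and the last equality uses $\intw\psi\xi=1$ to reduce $\intw{(\psi-1)^2/\psi}\xi$ to $\intw\psi\xi-2+\intw{\psi^{-1}}\xi=M-1$. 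Combining with~\eqref{wPoincare} then gives
\begin{equation*}
\intw{(h-\widetilde h)^2}\nu\le M\,\intw{(h-\widehat h)^2}\nu\le\tfrac{M}{\C_\star}\intw{|\nabla h|^2}\xi \,,
\end{equation*}
which is precisely~\eqref{wPoincare2} with an admissible constant $\C\ge\C_\star/M>0$.

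To conclude, the upper bound $\C\le\C_\star$ on the optimal constant is automatic: the variational property $\intw{(h-\widehat h)^2}\nu\le\intw{(h-c)^2}\nu$ valid for every constant $c$ (and in particular for $c=\widetilde h$) shows that any constant admissible in~\eqref{wPoincare2} is also admissible in~\eqref{wPoincare}, so the optimal $\C$ is bounded above by $\C_\star$. The only delicate point of the argument is the finiteness of $\intw{(\psi-1)^2/\psi}\xi$, and this is exactly what the integrability assumption $\psi^{-1}\in\mathrm L^1(\R^d,\d\xi)$ is there for; boundedness and positivity of $\psi$ play no role beyond giving meaning to $\psi^{-1}$ pointwise and to the normalization $\intw\psi\xi=1$.
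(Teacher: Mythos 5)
Your proof is correct, and it takes a genuinely different and more elementary route than the paper's. You reduce \eqref{wPoincare2} to \eqref{wPoincare} by the exact decomposition $\intw{|h-\widetilde h|^2}\nu=\intw{|h-\widehat h|^2}\nu+(\widehat h-\widetilde h)^2$ and then control the mismatch of the two averages by Cauchy--Schwarz with the weights $\sqrt\psi$ and $1/\sqrt\psi$, using $\intw{(\psi-1)^2\,\psi^{-1}}\xi=\intw{\psi^{-1}}\xi-1$; this is precisely where the hypothesis $\psi^{-1}\in\mathrm L^1(\R^d,\d\xi)$ enters, and it yields the clean explicit bound $\C\ge\C_\star\big(\intw{\psi^{-1}}\xi\big)^{-1}$, while the upper bound $\C\le\C_\star$ follows, as you note, from the variational characterization of $\widehat h$ (the paper closes its proof the same way). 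The paper argues differently: it keeps $\sigma_0>0$ active inside the proof through an IMS truncation, estimating the exterior piece $h_{2,R}$ by the Persson-type quotient $\mathsf q(R)$, the localized piece $h_{1,R}$ by \eqref{wPoincare} combined with a Holley--Stroock comparison on $B_{2R}$, and absorbing the average mismatch into the tail quantity $\varepsilon(R)$ proportional to $\int_{B_R^c}\psi^{-1}\dd\xi$ (the same integrability hypothesis), which produces a lower bound of the form $\min\{\mathsf Q(R),\mathsf q(R)\}-\varepsilon(R)-\kappa\,R^{-2}$ for $R$ large. Your argument buys brevity and a transparent constant, needing only the validity of \eqref{wPoincare} (supplied by Proposition~\ref{Prop:wPoincare} under $\sigma_0>0$), the positivity of $\psi$, and the normalizations $\xi(\R^d)=\intw{\psi}\xi=1$; the paper's localization scheme buys robustness, since it reduces everything to the asymptotic comparison of $\Phi$ and $\psi$ and would still function when only local Poincar\'e information plus behavior at infinity is available. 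One small point to make explicit in your write-up: both the orthogonal decomposition and the identity $\intw{(\psi-1)^2\,\psi^{-1}}\xi=\intw{\psi^{-1}}\xi-1$ use that $\nu$ is a probability measure, which is part of the conventions of Section~\ref{Sec:wPoincare}.
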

As we shall see in the proof, our method provides us with an explicit lower bound $\C$ in terms of $\C_\star$. We emphasize that in~\eqref{wPoincare2}, the right-hand side is not the \emph{variance} of~$h$ with respect of the measure $\dnu$ because we subtract the average with respect to the measure $\d\xi$ which appears in the left-hand side. In case $\phi(v)=\bangle v^\gam$, inequality~\eqref{wPoincare2} is equivalent to~\cite[inequality~(1.12)]{hal-01241680}, which can be deduced using the strategy of~\cite{MR2386063,MR2381160}. Also see Appendix~\ref{Appendix:wP} for further details.
\begin{proof} Let us consider a function $h$. With no loss of generality, we can assume that $\widetilde h=\intw h\xi=0$ up to the replacement of $h$ by $h-\widetilde h$. We use the IMS decomposition method (see~\cite{MR526292,MR708966}), which goes as follows. Let $\chi$ be a truncation function on $\R_+$ with the following properties: $0\le\chi\le1$, $\chi\equiv1$ on $[0,1]$, $\chi\equiv0$ on $[2,+\infty)$ and ${\chi'}^2/\(1-\chi^2\)\le\kappa$ for some $\kappa>0$. Next, we define $\chi_R(v)=\chi\big(|v|/R\big)$, $h_{1,R}=h\,\chi_R$ and $h_{2,R}=h\,\sqrt{1-\chi_R^2}$, so that $h_{1,R}$ is supported in the ball $B_{2R}$ of radius $2R$ centered at $v=0$ and $h_{2,R}$ is supported in $B_R^c=\R^d\setminus B_R$. Elementary computations show that $h^2=h_{1,R}^2+h_{2,R}^2$ and $|\nabla h|^2=|\nabla h_{1,R}|^2+|\nabla h_{2,R}|^2-h^2\,|\nabla \chi|^2/\(1-\chi^2\)$, so that $\big||\nabla h|^2-|\nabla h_{1,R}|^2-|\nabla h_{2,R}|^2\big|\le\kappa\,h^2/R^2$.

Since $h_{2,R}$ is supported in $B_R^c$, we know that
\[
\intw{|\nabla h_{2,R}|^2}\xi\ge\mathsf q(R)\intw{|h_{2,R}|^2}\nu
\]
for any $R>0$, where $\mathsf q$ is the quotient involved in the definition of $\sigma_0$. We recall that $\lim_{r\to+\infty}\mathsf q(r)=\sigma_0>0$. Using the method of the Holley-Stroock lemma (see~\cite{holley1987logarithmic} and~\cite{doi:10.1142/S0218202518500574} for a recent presentation), we deduce from~inequality~\eqref{wPoincare} that
\begin{align*}
\intw{|\nabla h_{1,R}|^2}\xi&\ge\C_\star\intw{\left|h_{1,R}-\widehat h_{1,R}\right|^2}\nu\\
&\ge\C_\star\int_{B_{2R}}\left|h_{1,R}-\widehat h_{1,R}\right|^2\,\psi\dd\xi\\
&\ge\C_\star\,\inf_{B_{2R}}\psi\,\min_{c\in\R}\int_{B_{2R}}\left|h_{1,R}-c\right|^2\dd\xi\\
&\ge\mathsf Q(R)\intw{\left|h_{1,R}\right|^2}\nu-\C_\star\,\frac{\inf_{B_{2R}}\psi}{\xi(B_{2R})}\(\intw{h_{1,R}}\xi\)^2
\end{align*}
where $\mathsf Q(R):=\C_\star\,\inf_{B_{2R}}\psi/\sup_{B_{2R}}\psi$. By the assumption $\widetilde h=0$, we know that
\[
\int_{B_R}h\dd\xi=-\int_{B_R^c}h\dd\xi\,,
\]
from which we deduce that
\[
\(\intw{h_{1,R}}\xi\)^2=\(\int_{B_R}h\dd\xi+\int_{B_R^c}\chi\,h\dd\xi\)^2\le\(\int_{B_R^c}|h|\dd\xi\)^2\le\intw{h^2}\nu\int_{B_R^c}\psi^{-1}\dd\xi
\]
where the last inequality is simply a Cauchy-Schwarz inequality. Let
\[
\varepsilon(R):=\C_\star\,\frac{\inf_{B_{2R}}\psi}{\xi(B_{2R})}\int_{B_R^c}\psi^{-1}\dd\xi\,.
\]
By the assumption that $\psi^{-1}\in\mathrm L^1(\R^d,\d\xi)$, we know that
\[
\lim_{R\to+\infty}\varepsilon(R)=0\quad\mbox{and}\quad\lim_{R\to+\infty}\frac{\varepsilon(R)}{\mathsf Q(R)}=0\,.
\]
Collecting all our assumptions, we have
\begin{align*}
\intw{|\nabla h|^2}\xi &\ge\intw{\(|\nabla h_{1,R}|^2+|\nabla h_{2,R}|^2-\frac\kappa{R^2}\,h^2\)}\xi
\\
&\ge \(\min\big\{\mathsf Q(R),\mathsf q(R)\big\}-\varepsilon(R)-\frac\kappa{R^2}\)\intw{|h|^2}\nu
\end{align*}
where $\min\big\{\mathsf Q(R),\mathsf q(R)\big\}-\varepsilon(R)-\kappa/R^2$ is positive for $R>0$, large enough.

Finally, let us notice that for any $c\in\R$ we have
\[
\intw{\left|h-c\,\right|^2}\nu=\intw{h^2}\nu-2\,c\intw h\nu+c^2\ge\intw{\left|h-\widehat h\right|^2}\nu
\]
with equality if and only if $c=\widehat h=\intw h\nu$. As a special case corresponding to $c=\widetilde h=\intw h\xi$, we have
\[
\intw{\left|h-\widetilde h\right|^2}\nu\ge\intw{\left|h-\widehat h\right|^2}\nu.
\]
This proves that $\C_\star\ge\C$.\end{proof}

In the special case of~\eqref{Example}, it is possible to give a slightly shorter proof using the Poincar\'e inequality on $B_R$, for the measure $\d\xi$: see~\cite[Chapter~6]{Mischler-book}. An independent proof of such an inequality is anyway needed for a general $\phi$. The proof of Corollary~\ref{Cor:wPoincare} is more general and reduces everything to a comparison of the asymptotic behavior of~$\phi$ and $\psi$. If these functions are given by~\eqref{Example}, inequality~\eqref{wPoincare2} can be rewritten in the form of~\eqref{weightedPoincare}, we have an estimate of $\C$ and we can characterize $\C_\star$ as follows.
\begin{proposition} The optimal constant $\C_\star$ is the ground state energy of the operator $\mathcal L=\psi^{-1}\(-\,\Delta+\Phi\)$ on $\mathrm L^2(\R^d,\psi\dd v)$.\end{proposition}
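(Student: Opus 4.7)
The plan is to identify $\C_\star$ as a Rayleigh quotient for $\mathcal L$ via the conjugation $h = w\,e^{\phi/2}$ already introduced in~\eqref{Schroedinger}. The first step I would carry out is to exhibit a zero mode of $\mathcal L$: using the identity $\Delta(e^{-\phi/2}) = \bigl(\tfrac14|\nabla\phi|^2-\tfrac12\Delta\phi\bigr)e^{-\phi/2} = \Phi\,e^{-\phi/2}$, the function $w_0 := e^{-\phi/2}$ satisfies $(-\Delta+\Phi)w_0 = 0$ and hence $\mathcal L w_0 = 0$; it has unit $\mathrm L^2(\psi\,\d v)$-norm because $\dnu = \psi\,\d\xi$ is a probability measure. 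So $0$ lies at the bottom of the spectrum of the self-adjoint non-negative operator $\mathcal L$, with one-dimensional kernel $\R\,w_0$.

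Next I would translate both sides of~\eqref{wPoincare} into the $w$-variable. The integration-by-parts computation recorded just below~\eqref{Schroedinger} shows that the left-hand side equals $\int_{\R^d}\bigl(|\nabla w|^2+\Phi\,|w|^2\bigr)\d v = \scalar{w}{\mathcal L w}_{\mathrm L^2(\psi\,\d v)}$, by the very definition $\mathcal L = \psi^{-1}(-\Delta+\Phi)$. For the right-hand side, the key observation is that the average $\widehat h = \int_{\R^d} h\,\dnu$ becomes exactly the scalar product $\scalar{w}{w_0}_{\mathrm L^2(\psi\,\d v)}$, since $h\,\psi\,e^{-\phi} = w\,w_0\,\psi$. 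Consequently $h-\widehat h = e^{\phi/2}(w-\widehat h\,w_0)$ and $\int_{\R^d}|h-\widehat h|^2\,\dnu = \bigl\|(\Id-\Pi_0)w\bigr\|_{\mathrm L^2(\psi\,\d v)}^2$, where $\Pi_0$ is the orthogonal projection in $\mathrm L^2(\psi\,\d v)$ onto $\R\,w_0$.

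Combining these two translations, the weighted Poincar\'e inequality~\eqref{wPoincare} with constant $\C_\star$ is equivalent to the operator inequality $\mathcal L \ge \C_\star\,(\Id-\Pi_0)$ in the quadratic-form sense on $\mathrm L^2(\psi\,\d v)$. By the min--max principle, the sharpest such $\C_\star$ is the infimum of the Rayleigh quotient $\scalar{w}{\mathcal L w}/\scalar{w}{w}$ over $w \perp w_0$ in $\mathrm L^2(\psi\,\d v)$, which is exactly the bottom of the spectrum of $\mathcal L$ restricted to $w_0^{\perp}$, i.e.\ the first positive spectral value of $\mathcal L$. Proposition~\ref{Prop:wPoincare}, under the standing hypothesis $\sigma_0>0$, then tells us that this value is either a genuine discrete eigenvalue below $\sigma_\star$ or coincides with $\sigma_\star$ itself.

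The main obstacle is semantic rather than technical: ``ground state energy'' would trivialize to $0$ if interpreted naively, since $w_0$ is a genuine zero mode. What the proposition really asserts is that the spectral gap between $0$ and the rest of the spectrum of $\mathcal L$ on $\mathrm L^2(\psi\,\d v)$ equals $\C_\star$, and the only additional care required is to check that $\mathcal D(\R^d)$ is a form core for the self-adjoint realization of $\mathcal L$, so that the variational infimum over test functions matches the operator-theoretic spectral quantity. Once this density is in hand, the identification of $\C_\star$ with the first positive eigenvalue of $\mathcal L$ follows immediately from the preceding steps.
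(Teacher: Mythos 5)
Your argument is correct and takes essentially the same route as the paper: the conjugation $h=w\,e^{\phi/2}$ from~\eqref{Schroedinger}, the explicit zero mode $e^{-\phi/2}$ spanning the kernel, and the resulting Rayleigh-quotient/min--max characterization are precisely the details the authors leave to the reader and already sketch in the proof of Proposition~\ref{Prop:wPoincare}. Your reading of ``ground state energy'' as the lowest spectral value of $\mathcal L$ on the orthogonal complement of its kernel (the first positive eigenvalue, or $\sigma_\star$ if none lies below it) matches the paper's own interpretation of $\C_\star$.
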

The proof relies on~\eqref{Schroedinger}. Details are left to the reader.

\section{Algebraic decay rates for the Fokker-Planck equation}\label{Appendix:B}

Here we consider simple estimates of the decay rates in the homogeneous case given by $f(t,x,v)=g(t,v)$ of equation~\eqref{eq:main}, that is, the Fokker-Planck equation
\be{eqn:FP}
\partial_tg=\sfL_1 g\,.
\ee
After summarizing the standard approach based on the \emph{weak Poincar\'e inequality} (see for instance~\cite{hal-01241680}) in Section~\ref{Appendix:wP}, we introduce a new method which relies on \emph{weighted $\mathrm L^2$ estimates}. As already mentioned, the advantage of weighted Poincar\'e inequalities is that the description of the convergence rates to the local equilibrium does not require extra regularity assumptions to cover the transition from super-exponential ($\gam>1$) and exponential ($\gam=1$) local equilibria to sub-exponential local equilibria, with $\gam\in(0,1)$.

\subsection{Weak Poincar\'e inequality}\label{Appendix:wP}

We assume $\gam\in (0,1)$ and $\eta\in\big(0,\beta\big)$ with $\beta = 2\(1-\gam\)$. By a simple H\"older inequality, with $(\tau+1)/\tau=\beta/\eta$, we obtain that
\begin{multline*}
\intw{\left|h-\widetilde h\right|^2}\xi=\intw{\left|h-\widetilde h\right|^2\,\bangle v^{-\eta}\,\bangle v^\eta}\xi\\
\le\(\intw{\left|h-\widetilde h\right|^2\,\bangle v^{-\beta}}\xi\)^\frac{\tau}{\tau+1} \(\intw{\left\|h-\widetilde h\right\|_{\mathrm L^\infty(\R^d)}^2\,\bangle v^{\beta\,\tau}}\xi\)^\frac 1{1+\tau}\,.
\end{multline*}
Here we choose $\widetilde h:=\intw h\xi$. Using~\eqref{weightedPoincare}, we end up with
\begin{equation}\label{weightedInterp}
\forall\,h\in \mathcal D(\R^d),\quad\intw{\left|h-\widetilde h\right|^2}\xi\le\C_{\gam,\tau}\(\intw{|\nabla h|^2}\xi\)^\frac \tau{1+\tau}\,\left\|h-\widetilde h\right\|_{\mathrm L^\infty(\R^d)}^\frac{2}{1+\tau}\,,
\end{equation}
for some explicit positive constant $\C_{\gam,\tau}$. We learn from~\eqref{L1-diss} that
\[
\dt\intw{\left|h(t,\cdot)-\widetilde h\right|^2}\xi=-\,2\intw{|\Dv h|^2}\xi
\]
if $g=h\,\M$ is solves~\eqref{eqn:FP}, and we also know that $\widetilde h$ does not depend on $t$. By a strategy that goes back at least to~\cite[Theorem~2.2]{MR1112402} and is due, according to the author, to D.~Stroock, we obtain that
\[
\intw{\left|h(t,\cdot)-\widetilde h\right|^2}\xi\le\(\(\intw{\left|h(0,\cdot)-\widetilde h\right|^2}\xi\)^{-\frac{1}{\tau}}+\frac{2\,\tau^{-1}}{\C_{\gam,\tau}^{1+1/\tau}\,\mathcal M}\,t\)^{-\tau}
\]
with $\mathcal M=\sup_{s\in(0,t)}\left\|h(s,\cdot)-\widetilde h\right\|_{\mathrm L^\infty(\R^d)}^{2/\tau}$. The limitation is of course that we need to restrict the initial conditions in order to have $\mathcal M$ uniformly bounded with respect to~$t$. Since $\eta$ can be chosen arbitrarily close to $\beta$, the exponent $\tau$ can be taken arbitrarily large but to the price of a constant $\C_{\gam,\tau}$ which explodes as $\eta\to \beta_-$.

Notice that~\eqref{weightedInterp} is equivalent to the \emph{weak Poincar\'e inequality}
\[
\forall\,h\in\mathcal D(\R^d)\,,\quad\C_{\gam,\tau}^{-1}\intw{\left|h-\widetilde h\right|^2}\xi\le \tfrac\tau{(1+\tau)^{1+\frac1\tau}}\,r^{-\frac\tau{1+\tau}}\intw{|\nabla h|^2}\xi+r\,\left\|h-\widetilde h\right\|_{\mathrm L^\infty(\R^d)}^2\,,
\]
for all $r>0$, as stated in~\cite[(1.6) and example 1.4~(c)]{MR1856277}. The equivalence of this inequality and~\eqref{weightedInterp} is easily recovered by optimizing on $r>0$. It is worth to remark that here we consider $\left\|h-\widetilde h\right\|_{\mathrm L^\infty(\R^d)}$ while various other quantities like, \emph{e.g.}, the median can be used in weak Poincar\'e inequalities.

\subsection{Weighted \texorpdfstring{$\mathrm L^2$}{L2} estimates}\label{Appendix:Moments} As an alternative approach to the \emph{weak Poincar\'e inequality} method of Appendix~\ref{Appendix:wP}, we can consider for some arbitrary $k>0$ the evolution according to equation~\eqref{eqn:FP} of $\rint|h(t,v)|^2\,\bangle v^k\,\d\xi=\rint|h(t,v)|^2\,\bangle v^k\,\M\dd v$ where $h:=g/\M$ solves
\[
\partial_th=\M^{-1}\,\Dv\cdot\big(\M\,\Dv h\big)\,.
\]
Let us compute
\[
\dt\rint|h(t,v)|^2\,\bangle v^k\,\M\dd v+\,2\rint|\Dv h|^2\,\bangle v^k\,\M\dd v=-\rint\Dv(h^2)\cdot\big(\Dv\bangle v^k\big)\,\M\dd v
\]
and observe with $\ell=2-\gam$ that
\[
\Dv\cdot\(\M\,\Dv\bangle v^k\)=\frac k{\bangle v^4}\(d+(k+d-2)\,|v|^2-\gam\,\bangle v^\gam\,|v|^2\)\le a-b\,\bangle v^{-\ell}
\]
for some $a\in\R$, $b\in(0,+\infty)$. The same proof as in Proposition~\ref{prop:propag_L2_m} shows that there exists a constant $\mathcal K_k>0$ such that
\[
\forall\,t\ge0\,\quad\nrm{h(t,\cdot)}{\mathrm L^2\(\bangle v^k\,\d\xi\)}\le\mathcal K_k\,\nrm{h^\init}{\mathrm L^2\(\bangle v^k\dd\xi\)}\,.
\]
Hence, if $h$ solves~\eqref{eqn:FP} with initial value $h^\init$, we can use~\eqref{weightedPoincare} to write
\[
\dt\intw{\left|h(t,\cdot)-\widetilde h\right|^2}\xi=-\,2\intw{|\Dv h|^2}\xi\le-\,2\,\C\intw{\left|h-\widetilde h\right|^2\,\bangle v^{-\beta}}\xi
\]
with $\beta = 2\(1-\gam\)$ and $\widetilde h=\intw h\xi$. With $\theta=k/\big(k+\beta\big)$, H\"older's inequality
\[
\intw{\left|h-\widetilde h\right|^2}\xi\le\(\intw{\left|h-\widetilde h\right|^2\,\bangle v^{-\beta}}\xi\)^\theta\(\intw{\left|h-\widetilde h\right|^2\,\bangle v^k}\xi\)^{1-\theta}
\]
allows us to estimate the right hand side and obtain the following result.
\begin{proposition}\label{Prop:RateRelaxHom} Let $\gam\in(0,1)$, let $g^\init\in\mathrm L^1_+(\dmu)\cap\mathrm L^2(\bangle v^k\dmu)$ for some $k>0$, and consider the solution $g$ to~\eqref{eqn:FP} with initial datum $g^\init$. With $\C$ as in~\eqref{weightedPoincare}, if $\overline g=\(\rint g\dd v\)\M$ where~$\M$ is given by~\eqref{SubExponential}, then
\[
\intw{\left|g(t,\cdot)-\overline g\right|^2}\mu\le\(\(\intw{\left|g^\init-\overline g\right|^2}\mu\)^{-\beta/k}+\frac{2\,\beta\,\C}{k\,\mathcal K^{\beta/k}}\,t\)^{-k/\beta}
\]
with $\beta=2\,(1-\gam)$ and $\mathcal K:=\mathcal K_k^2\,\nrm{g^\init}{\mathrm L^2\(\bangle v^k\ddmu\)}^2+\Theta_k\(\rint g^\init\dd v\)^2$.\end{proposition}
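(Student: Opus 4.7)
The plan is to work with the rescaled unknown $h := g/F$, which satisfies the symmetric form
\[
\partial_t h = F^{-1}\Dv\cdot(F\,\Dv h)
\]
of~\eqref{eqn:FP}, and to track the deviation $y(t) := \intw{(h-\widetilde h)^2}{\xi}$ from its conserved average $\widetilde h := \intw{h}{\xi} = \rint g\dd v$ (constant in $t$ by integration of the divergence form in $v$). Since $\dd\xi = F\dd v$ and $\widetilde h\,F = \overline g$, this deviation is nothing but $\intw{(g-\overline g)^2}{\mu}$, the quantity in the statement. The strategy, essentially outlined in the paragraphs immediately preceding the proposition, has three pieces: a uniform-in-time bound for the $\bangle v^k$-weighted $\mathrm L^2$ moment, the dissipation inequality coming from~\eqref{weightedPoincare}, and an H\"older interpolation collapsing the system into a scalar ODE of power type.

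First I would establish the moment bound. The computation displayed just before the statement provides
\[
\Dv\cdot(F\,\Dv\bangle v^k)/F \le a - b\,\bangle v^{-\ell}
\]
with $\ell = 2-\gam$ and suitable $a\in\R$, $b>0$, which is the Lyapunov-type estimate~\eqref{eq:Lyapunov} of Lemma~\ref{lem:Lyapunov} in the Fokker-Planck case. The splitting argument of Proposition~\ref{prop:propag_L2_m}, transcribed to the homogeneous setting (where the transport operator $\sfT$ is absent, so the splitting is directly $\sfL_1 = \sfB + \sfC$), then produces a constant $\mathcal K_k>0$ with
$\intw{h(t,\cdot)^2\bangle v^k}{\xi}\le \mathcal K_k^2\,\nrm{g^\init}{\mathrm L^2\(\bangle v^k\ddmu\)}^2$ for all $t\ge 0$. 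Combined with mass conservation and the elementary bound $(h-\widetilde h)^2\le 2\,(h^2+\widetilde h^2)$, this yields a time-independent control
\[
M(t) := \intw{(h-\widetilde h)^2\bangle v^k}{\xi} \le 2\,\mathcal K \,,
\]
with $\mathcal K$ as defined in the statement.

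Next, I feed the weighted Poincar\'e inequality~\eqref{weightedPoincare} into the entropy identity~\eqref{L1-diss}, yielding $\dt y \le -\,2\,\C\,D(t)$ with $D(t):=\intw{(h-\widetilde h)^2\bangle v^{-\beta}}{\xi}$ and $\beta = 2(1-\gam)$. H\"older's inequality with exponent $\theta = k/(k+\beta)$ gives $y \le D^\theta\,M^{1-\theta}$, hence
\[
D \ge y^{1+\beta/k}\,M^{-\beta/k} \ge c\,\mathcal K^{-\beta/k}\,y^{1+\beta/k}
\]
for an explicit $c>0$ coming from the factor $2$ in $M\le 2\mathcal K$. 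Substituting back produces the closed differential inequality $\dt y \le -\,c'\,\C\,\mathcal K^{-\beta/k}\,y^{1+\beta/k}$ which, rewritten as $\dt\bigl(y^{-\beta/k}\bigr)\ge c'\,\beta\,\C/(k\,\mathcal K^{\beta/k})$ and integrated on $[0,t]$, delivers the stated algebraic decay once we undo the identification $y(t)=\intw{(g-\overline g)^2}{\mu}$.

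The only genuinely nontrivial step is the moment bound of the second paragraph, which is the spatially homogeneous avatar of Lemma~\ref{lem:Lyapunov}--Proposition~\ref{prop:propag_L2_m}; once this is in hand, everything else amounts to one H\"older inequality followed by an explicit ODE integration. The feature worth emphasising, in contrast to the weak Poincar\'e approach recalled in Appendix~\ref{Appendix:wP}, is that no $\mathrm L^\infty$ bound on the initial datum is required: propagation of the weighted $\mathrm L^2$ moment alone suffices, which is the point of using~\eqref{weightedPoincare} rather than a weak Poincar\'e inequality.
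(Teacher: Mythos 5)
Your argument is essentially the paper's own proof: propagation of the $\bangle v^k$-weighted moment via the homogeneous analogue of Lemma~\ref{lem:Lyapunov} and Proposition~\ref{prop:propag_L2_m}, the dissipation identity combined with the weighted Poincar\'e inequality~\eqref{weightedPoincare}, H\"older interpolation with exponent $\theta=k/(k+\beta)$, and integration of the resulting power-type differential inequality. The only (harmless) deviation is the crude bound $(h-\widetilde h)^2\le 2\,(h^2+\widetilde h^2)$, which costs a factor $2^{-\beta/k}$ in the coefficient of $t$ compared with the stated constant; since $h\ge0$ here, expanding the square and discarding the nonpositive cross term $-2\,\widetilde h\intw{h\,\bangle v^k}\xi$ gives $\intw{(h-\widetilde h)^2\bangle v^k}\xi\le\mathcal K$ exactly and recovers the constant as stated.
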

We recall that $g=h\,\M$, $\,\overline g=\widetilde h\,\M$ and $\M\,\dmu = \d v = \M^{-1}\,\d\xi$. We notice that arbitrarily large decay rates can be obtained under the condition that $k>0$ is large enough. We recover that when $k<d\,\beta/2$, the rate of relaxation to the equilibrium is slower than $(1+t)^{-d/2}$ and responsible for the limitation that appears in Theorem~\ref{th:main}. However, the rate of the heat flow is recovered in Theorem~\ref{th:main} for a weight of order $k$ with an arbitrarily small but fixed $k>0$, if $\gam$ is taken close enough to $1$.
\begin{proof} Using
\[
\frac12\intw{\left|h-\widetilde h\right|^2\,\bangle v^k}\xi\le\intw{|h|^2\,\bangle v^k}\xi+\Theta_k\,\widetilde h^2=\mathcal K\,,
\]
we obtain that $y(t):=\intw{\left|g(t,\cdot)-\overline g\right|^2}\mu$ obeys to $y'\le-\,2\,\C\,\mathcal K^{1-1/\theta}\,y^{1/\theta}$ and conclude by a Gr\"onwall estimate.
\end{proof}

\newpage\section*{Acknowledgments} \begin{spacing}{0.9}{\small This work has been partially supported by the Project EFI (E.B., J.D., ANR-17-CE40-0030) of the French National Research Agency (ANR). Support by the Austrian Science Foundation (grants no.~F65 and W1245) is acknowledged by C.S. Moreover C.S. is very grateful for the hospitality at Universit\'e Paris-Dauphine. E.B., J.D., L.L.~and C.S.~are participants to the Amadeus project \emph{Hypocoercivity} no.~39453PH. All authors thank Cl\'ement Mouhot for stimulating discussions and encouragements.}\\
{\sl\scriptsize\copyright~2019 by the authors. This paper may be reproduced, in its entirety, for non-commercial purposes.}\end{spacing}

\bibliographystyle{acm}
\bibliography{BDLS-short}
\parindent=0pt\parskip=0pt\bigskip
\begin{center}
\rule{2cm}{0.5pt}
\end{center}
\end{document}